\documentclass[11pt]{amsart}

\usepackage{amsmath}
\usepackage{amssymb,amsfonts,mathrsfs, amsthm, stmaryrd, color}
\usepackage[margin=2 cm,heightrounded=true,centering]{geometry}

\tolerance=2000
\emergencystretch=20pt
\usepackage{amscd}
\usepackage{verbatim}
\usepackage{euscript}

\setlength{\textwidth}{16cm}%
\setlength{\textheight}{22cm}%

\newtheorem{theorem}{Theorem}[section]
\newtheorem{proposition}[theorem]{Proposition}
\newtheorem{lemma}[theorem]{Lemma}
\newtheorem{remark}[theorem]{Remark}

\newtheorem{definition}[theorem]{Definition}
\newtheorem{conjecture}[theorem]{Conjecture}

\DeclareMathOperator{\ric}{Ric}
\DeclareMathOperator{\hess}{Hess}
\DeclareMathOperator{\tr}{tr}
\DeclareMathOperator{\SO}{SO}
\DeclareMathOperator{\so}{so}
\DeclareMathOperator{\diag}{diag}

\begin{document}

\title[The rigid Horowitz-Myers conjecture]{The rigid Horowitz-Myers conjecture}

\author{Eric Woolgar}
\address{Dept of Mathematical and Statistical Sciences, and Theoretical Physics Institute, University of Alberta, Edmonton, AB, Canada T6G 2G1.}

\email{ewoolgar(at)ualberta.ca}

\date{\today}

\begin{abstract}
\noindent The \emph{new positive energy conjecture} was first formulated by Horowitz and Myers in the late 1990s to probe for a possible extended, nonsupersymmetric AdS/CFT correspondence. We consider a version formulated for complete, asymptotically Poincar\'e-Einstein Riemannian metrics $(M,g)$ with bounded scalar curvature $R\ge -n(n-1)$ and with no (inner) boundary except possibly a finite union of compact, totally geodesic hypersurfaces (horizons). This version then asserts that any such $(M,g)$ must have mass not less than a certain bound which is realized as the mass $m_0$ of a metric $g_0$ induced on a time-symmetric slice of a spacetime called an AdS soliton. This conjecture remains unproved, having so far resisted standard techniques. Little is known other than that the conjecture is true for metrics which are sufficiently small perturbations of $g_0$. We pose another test for the conjecture. We assume its validity and attempt to prove as a corollary the corresponding scalar curvature rigidity statement, which is that $g_0$ is the unique asymptotically Poincar\'e-Einstein metric with mass $m_0$ obeying $R\ge -n(n-1)$. Were a second such metric $g_1$ not isometric to $g_0$ to exist, it then may well admit perturbations of lower mass, contradicting the assumed validity of the conjecture. We find enough rigidity to show that the minimum mass metric must be static Einstein, so the problem is reduced to that of static uniqueness. When $n=3$ the manifold must be isometric to a time-symmetric slice of an AdS soliton spacetime, or must have a non-compact horizon. En route we study the mass aspect, obtaining and generalizing known results: (i) we relate the mass aspect of static metrics to the holographic energy density, (ii) we obtain the conformal invariance of the mass aspect when the bulk dimension is odd, and (iii) we show the vanishing of the mass aspect for negative Einstein manifolds with Einstein conformal boundary.
\end{abstract}

\maketitle

\section{Introduction}
\setcounter{equation}{0}

\noindent The AdS/CFT correspondence animated theoretical physics at the turn of this century by holding out the hope that strongly coupled quantum field theories could be studied simply by examining supergravity theory in the classical limit \cite{Maldacena, Witten3, HS}. This is simultaneously much more than one could have hoped and less than one might ideally have wished. The correspondence applies only between certain supergravities in the bulk asymptotically anti-de Sitter (AdS) spacetime and specific supersymmetric quantum field theories on a compact manifold of one less dimension, where the compact manifold is the conformal boundary for the bulk manifold. Physicists would like to study strongly coupled gauge theories that appear to describe the world we experience, and which therefore are not supersymmetric, such as quantum chromodynamics or QCD. Now QCD with massive quarks is not conformal either, but even an AdS/CFT correspondence for QCD with massless quarks would be very desirable, so a non-supersymmetric AdS/CFT correspondence would be of great interest.

Horowitz and Myers \cite{HM} considered possible consequences of such a desired non-supersymmetric AdS/CFT correspondence, were such a thing to exist. They studied a remarkable solution of the Einstein equations which they called an \emph{AdS soliton}. It is globally static and asymptotically locally anti-de Sitter. We will refer to the time-symmetric slices of an AdS soliton as \emph{Horowitz-Myers geons}. These time-symmetric slices are conformally compactifiable and asymptotically hyperbolic---indeed, they are \emph{asymptotically Poincar\'e-Einstein} (APE) \cite{BMW1}---and admit a mass according to the definition of Wang \cite{Wang} as generalized by Chru\'sciel and Herzlich \cite{CH}. Surprisingly, the mass of a Horowitz-Myers geon is \emph{negative}.

Positive mass theorems exist for asymptotically anti-de Sitter spacetimes \cite{AD, GHW, Woolgar} and for conformally compactifiable, asymptotically hyperbolic Riemannian manifolds \cite{MinOo, Wang, CH, ACG}. However, most of these theorems assume either a suitable spinor structure admitting global, asymptotically constant solutions of the Witten equation, or at least a spherical conformal boundary at infinity. None of them apply, and none of them have been successfully adapted, in sufficient generality to manifolds with toroidal conformal infinity (but see \cite{LN}). The Horowitz-Myers geons have toroidal conformal infinity and, while they admit spinor structure, they do not admit asymptotically constant Killing spinors. Thus they cannot be supersymmetric and the arguments of the Witten positive energy proof do not apply.

Horowitz and Myers estimated the ground state Casimir energy of a conformal field theory on a flat torus and compared it to the mass of the Horowitz-Myers geon for which this torus could serve as the conformal boundary at infinity. They found that these matched, modulo a factor of $3/4$ which was not unexpected due to the estimation method for the ground state energy calculation (which was estimated at weak coupling). They then formulated a series of conjectures, based on this matching and the understanding of the Casimir energy as the CFT ground state energy, to the effect that there should be what they called a \emph{new positive energy theorem} for spacetimes with the same conformal boundary at infinity as the AdS solitons (and with suitable asymptotic behaviour on approach to the conformal boundary). The conjectures vary according to the context in which they are set, but they each posit that the infimum of the masses of all spacetimes with the conformal boundary of the soliton, suitable asymptotics, and an appropriate lower bound governing bulk curvature (i.e., a pointwise \emph{energy condition}) would be realized by a Horowitz-Myers geon; i.e., a time-symmetric slice of an AdS soliton. The veracity of the conjectures would be evidence in favour of a non-supersymmetric version of AdS/CFT. We focus here on one of their conjectures in particular.

\begin{conjecture}[{{Riemannian Horowitz-Myers Conjecture; \cite[Conjecture 4.3]{HM}, \cite{CM}}}] \label{conjecture1.1}
Let $(M,g)$ be a complete asymptotically Poincar\'e-Einstein (APE) $n$-manifold $n\ge 3$, with compact, totally geodesic, possibly empty boundary and flat toroidal conformal infinity. Let the scalar curvature $R_g$ of $g$ obey $R_g+n(n-1)\ge 0$. Then the Wang mass-energy $m$ of $(M,g)$ obeys $m\ge m_0$ where $m_0$ is the mass of the Horowitz-Myers geon which has least mass amongst all Horowitz-Myers geons with the same conformal boundary-at-infinity.
\end{conjecture}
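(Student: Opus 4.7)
The plan is to attempt a reduction strategy analogous to the conformal-then-rigidity approach used in asymptotically flat positive mass proofs: reduce from any admissible $(M,g)$ to a metric of constant scalar curvature $-n(n-1)$, then to an Einstein metric, and finally bound its mass from below by $m_0$, with each reduction weakly decreasing the Chru\'sciel-Herzlich mass.

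For the conformal step, I would solve the semilinear problem
\[
-c_n \Delta_g u + R_g u = -n(n-1) u^{(n+2)/(n-2)}, \qquad c_n = \tfrac{4(n-1)}{n-2},
\]
with $u \to 1$ at conformal infinity, producing $\tilde g = u^{4/(n-2)} g$ with $R_{\tilde g} \equiv -n(n-1)$. Since $R_g \geq -n(n-1)$, the maximum principle should give $0 < u \leq 1$, and a careful analysis of the subleading coefficients in the asymptotic expansion of $u$ near the flat torus should show that the mass of $\tilde g$ is no greater than that of $g$, with equality only when $u \equiv 1$ and hence $R_g \equiv -n(n-1)$ throughout. Assuming a mass minimizer in the reduced class, the first variation of mass along transverse-traceless perturbations preserving $R \equiv -n(n-1)$ should force $\ric_g = -(n-1) g$, reducing the problem to bounding below by $m_0$ the mass of a Poincar\'e-Einstein metric with flat toroidal conformal infinity and the prescribed horizon structure.

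The hard step, and the reason the conjecture has resisted proof, is this final Einstein/static lower bound. The Witten-Min-Oo spinor proofs collapse here because no asymptotically constant Killing spinors exist in this conformal class, and the Bunting-Masood-ul-Alam doubling technique requires a spherical conformal boundary to produce an asymptotically flat double on which a standard positive mass theorem can be invoked. The conformal reduction is also more delicate than in the asymptotically flat case: linearized at $g_0$, the relevant scalar-curvature operator has nontrivial kernel from translations along the torus factors and from the one-parameter family of Horowitz-Myers geons with differing mass parameter, so a naive implicit-function argument cannot close. A successful attack appears to require a genuinely new ingredient---perhaps a monotone quantity along a geometric flow tailored to toroidal infinity, in the spirit of the Lee-Neves inverse mean curvature flow argument referenced in the introduction, but engineered to cover the full admissible class rather than a perturbative or highly symmetric subclass. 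I expect this Einstein/static-uniqueness step to be the crux, which is consistent with the present paper's strategy of \emph{assuming} the conjecture and extracting rigidity consequences rather than attempting the conjecture directly.
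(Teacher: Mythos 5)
The statement you were asked to prove is Conjecture \ref{conjecture1.1}, which the paper explicitly leaves \emph{unproved}; there is no proof in the paper to compare against, and your proposal does not close the gap either, as you yourself concede in your final paragraph. Beyond that openly admitted crux, two concrete steps in your reduction fail. First, you assume without argument that a mass minimizer exists in the class of APE metrics with $R\equiv -n(n-1)$; a priori the mass could be unbounded below on this class (ruling that out is essentially the content of the conjecture), so the existence of a minimizer cannot be an input. Second, your first-variation argument cannot force $\ric_g=-(n-1)g$. The Euler--Lagrange condition for extremizing mass subject to the constraint $R\equiv -n(n-1)$ is the nontriviality of the kernel of the formal adjoint $L^*_g$ of the linearized scalar curvature operator (Corvino), i.e.\ the \emph{static} Einstein system \eqref{eq2.15}--\eqref{eq2.16} for a pair $(g,N)$, not the Einstein condition on $g$ itself. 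Indeed the conjectured minimizer, the Horowitz--Myers geon, is static Einstein but not Einstein, and by Proposition \ref{proposition2.7} a Poincar\'e--Einstein metric with flat toroidal (hence Einstein) conformal infinity has mass zero, whereas $m_0<0$; so a reduction to the Einstein case would be inconsistent with the conjecture's own ground state.

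The parts of your outline that do work --- the Aviles--McOwen conformal deformation to constant scalar curvature with strict mass decrease, and the variational characterization of the constrained extremum --- are exactly the machinery the paper deploys in Lemmas \ref{lemma3.1}--\ref{lemma3.3}, but pointed in the opposite logical direction: the paper \emph{assumes} the conjecture and uses these deformations to derive a contradiction unless the minimum-mass metric is static Einstein, thereby proving the rigidity corollary (Theorem \ref{theorem1.2}), and then invokes Anderson's static uniqueness theorem to conclude only in the case $n=3$. The ingredient missing from your approach (and from the literature) is precisely the lower bound $m\ge m_0$ for static Einstein APE metrics with flat toroidal conformal infinity --- equivalently a static uniqueness or classification theorem in this setting --- which is open for $n>3$ and which, even where available, would still leave the conjecture unproved for metrics that are not already known to be static.
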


We have modified this conjecture from the original in various ways. First, the original was posed only for $n=4$ because it was motivated by string phenomenology considerations on a ten-dimensional warped product spacetime. Kaluza-Klein reduction applied to the fibres of the warped product reduced the ten dimensions to five, and then a time-symmetric slice was taken to get to a $4$-dimensional Riemannian manifold. This motivation notwithstanding, the conjecture can be posed in any dimension $n\ge 3$, and indeed this was done in \cite{CM}. Second, the motivation made use of spacetime and so the conjecture spoke of spacetime AdS asymptotics, but since the version of interest here is Riemannian, it is natural to phrase it in Riemannian terms. In that case, the natural choice is to use APE asymptotics \cite{BMW1}; see Definition \ref{definition2.1}. Third, the original conjecture made no mention of (inner) boundaries. We include compact totally geodesic boundaries to allow for ``horizons''. Fourth, the original conjecture was posed before the notion of the mass of an asymptotically hyperbolic manifold was made clear by Wang (\cite{Wang}, see also \cite{CH}), so it was phrased in terms of a limit of a certain quasi-local mass. As there is now a good definition of asymptotically hyperbolic mass, we have updated the phrasing to use this definition. And fifth, there is a countable infinity of non-isometric Horowitz-Myers geons which share the same conformal boundary (\cite{Anderson}, \cite{GSW2}). The one with least mass is the one chosen so that the shortest nontrivial cycle on the boundary at infinity bounds a disk in the bulk \cite{Page}. It is the mass of this geon which serves as $m_0$.

If true, the conjecture would provide evidence in favour of an AdS/CFT correspondence in the absence of supersymmetry, and perhaps allow us to understand the confining phase of QCD in terms of general relativity in asymptotically AdS spacetimes \cite{Witten3, HP}. However, the conjecture remains unproved. Indeed, since the conjecture was first posed, there has been very little progress. The main point of the conjecture is that it applies to manifolds where the Witten spinorial method cannot be applied. Other methods, such as those of Schoen-Yau \cite{SY} or the inverse mean curvature flow technique first discussed by Geroch \cite{Geroch, HI} fail also \cite{Gibbons}. One difficulty is that these techniques tend to rely (implicitly) on various forms of comparison to the putative minimal mass metric (ground state). These comparisons work well when the ground state has constant curvature, but that is not the case here.

Herein we prove a corollary of Conjecture \ref{conjecture1.1}.

\begin{theorem}[Scalar curvature rigidity of mass-minimizing Horowitz-Myers geons]\label{theorem1.2}
Assume that Conjecture \ref{conjecture1.1} is true. Let $(M,g)$ be a complete, asymptotically Poincar\'e-Einstein (APE) $n$-manifold with toroidal conformal infinity and with Wang mass $m=m_0$, where $m_0$ is the mass of the Horowitz-Myers geon $(M_0,g_0)$ which has least mass among all Horowitz-Myers geons with that conformal infinity. Let the scalar curvature $R_g$ of $(M,g)$ obey $R_g+n(n-1)\ge 0$.
\begin{enumerate}
\item[(i)] Then $(M,g)$ is an APE static Einstein metric.
\item[(ii)] If $n=3$ and the static potential $N$ obeys $xN\to 1$ at conformal infinity for $x$ a special defining function (i.e., $|dx|_{x^2g}=1$ on a collar of conformal infinity), then $(M,g)$ is isometric to $(M_0,g_0)$.
\end{enumerate}
\end{theorem}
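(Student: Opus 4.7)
The plan is to derive rigidity from the assumed mass-minimality of $g$ and then close the dimension-three case by static uniqueness. By Conjecture \ref{conjecture1.1}, $g$ achieves the infimum of the Wang mass on complete APE metrics with $R+n(n-1)\ge 0$ and the prescribed toroidal conformal infinity, so no admissible deformation of $g$ decreases the mass. This one-sided restriction, applied to two separate families of deformations, will produce both pointwise saturation of the curvature bound and the existence of a static potential. The first step is to show $R_g\equiv -n(n-1)$: if $R_g>-n(n-1)$ on some nonempty open set, then following the Schoen--Yau--Lohkamp conformal-deformation approach one solves the Yamabe-type equation
\[
-\tfrac{4(n-1)}{n-2}\,\Delta_g\phi+R_g\phi+n(n-1)\phi^{(n+2)/(n-2)}=0,\qquad \phi\to 1\text{ at infinity},
\]
for some positive $\phi\not\equiv 1$, using $\phi\equiv 1$ as a supersolution together with a suitable local subsolution. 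The conformally related metric $\tilde g=\phi^{4/(n-2)}g$ is complete APE with the same toroidal conformal infinity, satisfies $R_{\tilde g}\equiv -n(n-1)$, and the subleading asymptotics of $\phi-1$ produce a strictly negative correction to the Wang mass, contradicting the assumed minimality.

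With $R_g$ now pinned to $-n(n-1)$, I would extract the static structure from the first variation of the mass. Let $L_g^*(N)=\hess_g N-(\Delta_g N)\,g-N\,\ric_g$ denote the formal $L^2$-adjoint of the linearization of $g\mapsto R_g+n(n-1)$. Adapting the Corvino--Chru\'sciel--Delay deformation philosophy to the APE setting, if $L_g^*$ had trivial kernel on lapses $N\sim 1/x$ at conformal infinity then $DR_g$ would be surjective on appropriate weighted spaces, and one could construct compactly supported perturbations of $g$ preserving the scalar-curvature constraint while shifting the Wang mass in any direction, in particular downward, again contradicting mass-minimality. Hence there exists a nontrivial $N\sim 1/x$ satisfying $L_g^*(N)=0$. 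Tracing this equation and using $R_g=-n(n-1)$ yields $\Delta_g N=nN$, and substituting back gives
\[
\hess_g N\;=\;N(\ric_g+n\,g),
\]
which is exactly the condition that $(\mathbb{R}\times M,\,-N^2 dt^2+g)$ be Einstein with $\tilde{\mathrm{Ric}}=-n\,\tilde g$. This proves part (i).

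For part (ii), with $n=3$ the Lorentzian product $(\mathbb{R}\times M,-N^2 dt^2+g)$ is a four-dimensional static Einstein manifold whose conformal infinity is the flat torus of the AdS soliton and whose static potential carries the normalization $xN\to 1$. In dimension three the Ricci tensor determines the full Riemann tensor, and the equation $\hess_g N=N(\ric_g+3g)$ inverts (where $N>0$) to express $\ric_g$ in terms of $N$ and $\hess_g N$; combined with $\Delta_g N=3N$ and the prescribed asymptotic data this becomes a determined propagation system that integrates uniquely inward from conformal infinity, and the integration identifies $(M,g)$ with $(M_0,g_0)$. The non-compact horizon exception recorded in the statement corresponds precisely to a non-compact zero set of $N$, where the inversion breaks down and the propagation-from-infinity argument fails.

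The main technical obstacle is Step~2: establishing an APE analogue of the Corvino--Chru\'sciel--Delay deformation theorem in the presence of a toroidal (rather than spherical) conformal infinity and a possibly nontrivial compact horizon, and carefully tracking how a $1/x$-type lapse interacts with the first variation of the Wang mass, both require substantial work in weighted Sobolev spaces. The static-uniqueness step for part (ii) is also genuinely delicate --- a full three-dimensional classification of APE static Einstein manifolds with toroidal conformal infinity remains open --- which is why the theorem retains the non-compact horizon caveat.
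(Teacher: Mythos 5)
Your overall architecture (Yamabe step to pin the scalar curvature, Corvino-type adjoint-kernel argument to extract a static potential, static uniqueness for $n=3$) matches the paper's, but two of your key mechanisms do not actually work as stated. First, in your Step 2 you claim that triviality of $\ker L_g^*$ lets you build \emph{compactly supported} perturbations that preserve the constraint $R=-n(n-1)$ while ``shifting the Wang mass downward.'' This is impossible: the Wang mass is read off from the order-$x^n$ coefficient of the asymptotic expansion, which a compactly supported perturbation leaves untouched, so such a perturbation can never change the mass. What Corvino's theorem actually provides (and what the paper uses in Lemma \ref{lemma3.1}) is a compactly supported perturbation that keeps the mass \emph{fixed} but makes $A=R+n(n-1)$ strictly positive somewhere; the mass decrease then comes only from a \emph{second} application of your Step 1 (the Yamabe/conformal step, Lemmata \ref{lemma3.2}--\ref{lemma3.3}) to this perturbed metric. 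So the logical order must be: if $A\not\equiv 0$, conformally deform and lower the mass (contradiction); if $A\equiv 0$ and $\ker L_g^*$ is trivial, first perturb \`a la Corvino to make $A\not\equiv 0$ at fixed mass, \emph{then} conformally deform and lower the mass (contradiction); hence $\ker L_g^*$ is nontrivial. Your write-up establishes the conformal step only for the original metric and then short-circuits the second loop with the invalid mass-shifting claim. (Your Step 1 also silently assumes the conformally rescaled metric is still APE with the same expansion through order $x^{n-1}$, and that the $x^n$ coefficient of $\phi-1$ is strictly negative; both require the indicial-root and barrier arguments of Lemmata \ref{lemma3.2} and \ref{lemma3.3}, not just the strong maximum principle.)

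Second, your proof of part (ii) by ``integrating a determined propagation system uniquely inward from conformal infinity'' is not a valid argument. The static Einstein system determines the formal expansion at infinity only up to the free Neumann data (the tracefree part of $\theta$ at order $x^{n-1}$), and nothing in the hypotheses fixes $\theta$ to equal that of the soliton; unique continuation from conformal infinity for Einstein metrics requires matching \emph{both} Dirichlet and Neumann data, which you do not have. The paper instead invokes the global uniqueness theorem of Anderson (in the form of \cite[Theorem 4]{ACD}) for $4$-dimensional static Einstein metrics with toroidal conformal infinity, and it must separately dispose of the case where $N$ has a compact zero set: there the same theorem forces $(M,g,N)$ to be a Lemos toroidal black hole exterior, which has positive mass and is therefore excluded because $m_0<0$. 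Your sketch omits this black-hole branch entirely, and the ``non-compact horizon caveat'' you mention is not a consequence of your propagation argument breaking down but a genuinely separate case that the global theorem does not cover.
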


Conclusion (ii) of Theorem \ref{theorem1.2} says that the unique static, asymptotically locally anti-de Sitter 4-dimensional spacetime that one constructs from $(M,g)$ by solving the static Einstein equations (see Section 2.3) is an AdS soliton (see Section 2.4).\footnote
{We take this opportunity to emphasize that in the present paper, $n$ will denote the dimension of the Riemannian manifold which serves as a time-symmetric slice of an $(n+1)$-dimensional spacetime.}
Anderson \cite{Anderson} proved that $4$-dimensional AdS solitons are the unique asymptotically AdS, complete, static Einstein spacetimes with toroidal boundary. This is the $n=3$ case in our terminology. The Riemannian result in \cite{Anderson} is expressed in terms of static spacetimes in \cite{ACD}. The condition on the zero set of the static potential excludes static configurations of ``black branes'' that extend to infinity in such a way that $(M,g)$ remains APE. Such configurations, if they can exist (see \cite{GM} for results in the asymptotically flat case), would not be slices of asymptotically anti-de Sitter spacetimes.

To understand this result as a test of Conjecture \ref{conjecture1.1}, recall the analogous case of nonsupersymmetric Kaluza-Klein asymptotically flat spacetimes. There, a second zero-mass metric, not isometric to a Kaluza-Klein manifold with flat base, was found \cite{Witten2}. Perturbations of this metric were then found to have (unbounded) negative mass \cite{BP}, destabilizing the theory. The above result is slightly stronger. For $n=3$, a second metric (not isometric to $g_0$ but otherwise obeying the above conditions) with mass $m_0$ would immediately falsify Conjecture \ref{conjecture1.1} without having to test for negative mass perturbations. In higher dimensions, the above theorem does not rule out a second such metric, but it must be static Einstein (the example in \cite{Witten2} is not static, and perhaps this may stabilize such a metric against negative mass perturbations).

Little can currently be said in higher dimensions beyond that any such mass minimizing metric must be static Einstein. The issue is the absence of uniqueness theorems for static Einstein metrics. A uniqueness theorem for the AdS soliton, and thus for Horowitz-Myers geons, is available (\cite{GSW1}, \cite{GSW2}), but it requires assumptions on the asymptotics that are stronger than those needed in the $n=3$ case. Moreover, static black hole uniqueness theorems would also be helpful, but uniqueness of higher dimensional static anti-de Sitter black holes remains an open question. Nonetheless, it is useful to say something because, in doing so, we illuminate mechanisms by which the Horowitz-Myers conjecture might fail in higher dimensions. We postpone this discussion to the concluding section, where we state limited results.

Finally, we note that it is not unusual for the rigidity case to be studied before the corresponding positive energy theorem is available (in the present case, we of course replace ``positive'' by ``bounded below''). A similar, though not entirely analogous thing occurred for asymptotically hyperbolic manifolds with spherical conformal infinity, where Min-Oo \cite{MinOo} proved a scalar curvature rigidity theorem for zero-mass spin manifolds with spherical conformal infinity. Andersson and Dahl \cite{AnderssonDahl} were able to generalize this result to manifolds whose conformal infinity was a quotient of a sphere.

This paper is organized as follows. In Section 2 we give some background. Asymptotically hyperbolic manifolds and related concepts are explained in Section 2.1. We define the Wang mass for certain asymptotically hyperbolic manifolds in Section 2.2. We study its conformal invariance properties. Proofs are relegated to the Appendix. Static Einstein metrics with negative scalar curvature are discussed in Section 2.3. This enables us to relate the Wang mass aspect to the vacuum expectation values of the stress-energy tensor for the boundary conformal field theory of the AdS/CFT correspondence. In Section 2.4, we introduce AdS solitons and their time-symmetric slices, the Horowitz-Myers geons. In section 2.5 we explain why standard techniques for proving rigidity do not work in this situation. In Section 3.1 we use work of Corvino \cite{Corvino} to show that when the ``static Einstein operator'' (the formal adjoint of the linearized scalar curvature operator) has trivial kernel, there is a variation of the metric which increases the scalar curvature and does not change the mass. In section 3.2, we use a solution of the Yamabe problem to construct another metric which has constant scalar curvature and strictly lower mass than the initial metric. This violates the Horowitz-Myers conjecture. In Section 3.3 we therefore conclude that the kernel of the static Einstein operator must have been nontrivial, and the original metric must have paired with a lapse function which solves the static Einstein equations. Section 4 contains a further result on the $n>3$ case, and some discussion regarding potential breakdown of the Horowitz-Myers conjecture. An Appendix contains proofs of results stated in Section 2, including a proof generalizing a result in \cite{AnderssonDahl} that the mass vanishes for any metric that approaches Poincar\'e-Einstein suitably quickly at infinity.

\subsection{Acknowledgements} The author is grateful to Greg Galloway for recommending reference \cite{Corvino}, which proved to be key, and for comments on an earlier draft. He is grateful to the Institut Henri Poincar\'e for hospitality while the majority of this work was completed, and to the Fondation sciences math\'ematiques de Paris for a grant in support of the visit. This work was partially supported by NSERC Discovery Grant RGPIN 203614.

\section{Background}
\setcounter{equation}{0}

\subsection{Asymptotically hyperbolic and Poincar\'e-Einstein metrics}

\noindent A metric $g$ is \emph{conformally compactifiable} is there is a positive function $x:M\to {\mathbb R}$ and a manifold-with-boundary ${\bar M}$ into which $M$ embeds such that (i) ${\bar M}\setminus M = \partial {\bar M}$, (ii) $x$ extends smoothly to ${\partial {\bar M}}$, (iii) $x\vert_{\partial {\bar M}}=0$, (iv) $dx$ is non-vanishing on $\partial {\bar M}$, and (v) ${\bar g}:=x^2 g$ extends continuously to a metric on ${\bar M}$. We refer to $\partial {\bar M}$ as the boundary-at-infinity of $M$. It is sometimes denoted by $\partial_{\infty}M$. The conformal equivalence class of ${\bar g}\vert_{\partial {\bar M}}$ is called the \emph{conformal boundary} of $(M,g)$. We call $x$ a \emph{defining function} for the conformal boundary. We can always arrange that $|dx|^2_{\partial {\bar M}}=1$. If ${\bar g}$ is at least $C^1$, we can solve the differential equation $|dx|^2_{\bar g}=1$ in a collar neighbourhood of ${\partial {\bar M}}$. Then $x$ is called a \emph{special defining function} and $(M,g)$ is called \emph{conformally compactifiable and asymptotically hyperbolic}, or simply \emph{asymptotically hyperbolic}. On a neighbourhood of conformal infinity, the metric can then be written in the form \begin{equation}
\label{eq2.1}
g=x^{-2}\left ( dx^2 \oplus h_x \right )\ .
\end{equation}
Then $dx^2+h_x$ is a metric in Gaussian normal coordinate form, and $g$ is said to be in \emph{normal form} or \emph{Graham-Lee normal form}.

The sectional curvatures of an asymptotically hyperbolic metric approach $-1$ as $x\to 0$. We will be concerned with asymptotically hyperbolic manifold that have Ricci curvature which approaches $-(n-1)g$ sufficiently rapidly near infinity.
\begin{definition}\label{definition2.1}
If
\begin{equation}
\label{eq2.2}
|E|_g\in {\mathcal O}(x^n)\ ,
\end{equation}
where
\begin{equation}
\label{eq2.3}
E:=\ric+n(n-1)g\ ,
\end{equation}
then the manifold is called \emph{asymptotically Poincar\'e-Einstein} or APE \cite{BMW1}. If $E$ vanishes everywhere, the manifold is said to be \emph{Poincar\'e-Einstein}.
\end{definition}

Let $E_{11}=:E(\partial_x,\partial_x)$ and let $E^{\perp}$ denote the projection of $E$ orthogonal to $\partial_x$. Then
\begin{eqnarray}
\label{eq2.4}
E_{11}&=&-\frac12 \tr_{h_x} h_x''+\frac{1}{2x}\tr_{h_x} h_{x}' +\frac14 \vert h_x'\vert^2_{h_x}\ ,\\
\label{eq2.5}
E^{\perp}&=&\ric(h_x)-\frac12 h_x''+\frac{(n-2)}{2x}h_x'+\frac{1}{2x}h_{x} \tr_{h_x}h_x'\\
&&\nonumber +\frac12 h_x'\cdot h_x^{-1}\cdot h_x'-\frac14h_x'\tr_{h_x}h_x'\ .
\end{eqnarray}
Here a prime denotes $\frac{\partial}{\partial x}$ and $h_x'\cdot h_x^{-1}\cdot h_x'$ is the tensor with components $\left ( h_x'\right )_{AC}h^{CD}\left ( h_x'\right )_{DB}$. Then for $A:=\tr_g E = R_g+n(n-1)$ we have
\begin{equation}
\label{eq2.6}
A:=\tr_g E = (n-1)x\tr_hh'-x^2\left [ \tr_h h''-\frac34 \vert h' \vert^2 +\frac14 \left ( \tr_hh'\right )^2-R_h\right ] \ .
\end{equation}

\begin{lemma}\label{lemma2.2}
If the metric (\ref{eq2.1}) is APE and $h_0$ is Einstein, normalized so that $\ric_{h_0}=\lambda (n-2)h_0$ with $\lambda \in \{-1,0,1\}$, then the metric takes the form
\begin{equation}
\label{eq2.7}
g=x^{-2}\left [ dx^2 \oplus \left ( \left ( 1-\frac{\lambda}{4}x^2 \right )^2 h_0 +\frac{x^{n-1}}{(n-1)}\theta+{\mathcal O}(x^n)\right )  \right ]\ ,
\end{equation}
with
\begin{equation}
\label{eq2.8}
\tr_{h_0}\theta=0\ .
\end{equation}
\end{lemma}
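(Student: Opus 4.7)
The plan is to compare $(M,g)$ with a model reference metric and match Taylor coefficients of $h_x$ order by order in $x$. I will first verify that
\begin{equation*}
g_\ast := x^{-2}\left[dx^2 + \left(1 - \tfrac{\lambda}{4}x^2\right)^2 h_0\right]
\end{equation*}
is an \emph{exact} Poincar\'e-Einstein metric whenever $h_0$ is Einstein with $\ric_{h_0} = \lambda(n-2)h_0$. Setting $h_x = f(x)^2 h_0$ with $f(x) = 1 - \lambda x^2/4$, the $h_x$-traces and $h_x$-norms appearing in (\ref{eq2.4}) depend only on $f'/f$ and $f''/f$, and formula (\ref{eq2.4}) reduces to $E_{11} = (n-1)\left[f'/(xf) - f''/f\right]$, which vanishes identically since $f'/x = -\lambda/2 = f''$. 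Using the invariance $\ric(f^2 h_0) = \ric(h_0)$ under constant conformal rescaling, a parallel reduction of (\ref{eq2.5}) shows $E^\perp = 0$ as well.

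Next, writing a generic APE metric as $h_x = (1-\lambda x^2/4)^2 h_0 + k_x$ with $k_x = \sum_{j\ge 1}x^j k^{(j)}$, I will match coefficients iteratively. The APE condition $|E|_g \in \mathcal{O}(x^n)$, after accounting for the factor $g = x^{-2}\bar g$ in the metric norm, becomes $|E|_{\bar g}\in \mathcal{O}(x^{n-2})$, so components of $E$ in a $\bar g$-orthonormal frame are $\mathcal{O}(x^{n-2})$. At order $x^{-1}$, the dominant terms in (\ref{eq2.5}) involving $k^{(1)}$ are $\tfrac{n-2}{2x}k^{(1)} + \tfrac{1}{2x}h_0\tr_{h_0}k^{(1)}$; taking the $h_0$-trace gives $\tfrac{2n-3}{2x}\tr_{h_0}k^{(1)}$, so $\tr_{h_0}k^{(1)} = 0$ and hence $k^{(1)} = 0$ for $n\ge 3$. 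Inductively, at order $x^{j-2}$ for $2 \le j \le n-2$, the leading contribution of $k^{(j)}$ yields an algebraic equation with indicial roots at $j = 0$ and $j = n-1$, whose inhomogeneous source vanishes identically because $\ric(h_0) = \lambda(n-2)h_0$ is parallel; thus $k^{(j)} = 0$ at every such intermediate order.

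At the critical order $j = n-1$, the algebraic operator becomes singular and a free symmetric $2$-tensor $\theta := (n-1)k^{(n-1)}$ appears. To pin down its trace I use (\ref{eq2.6}): since $A = \tr_g E \in \mathcal{O}(x^n)$, and the only $x^{n-1}$ contribution from $k_x$ to $(n-1)x\tr_{h_x}h_x'$ in (\ref{eq2.6}) is
\begin{equation*}
(n-1)x \cdot x^{n-2} \tr_{h_0}\theta = (n-1)x^{n-1}\tr_{h_0}\theta,
\end{equation*}
one concludes $\tr_{h_0}\theta = 0$. The main obstacle is the inductive step in the second paragraph: one must verify that the Einstein assumption on $h_0$ suppresses the obstruction tensors (involving the trace-free Ricci of $h_0$ and its covariant derivatives) that would otherwise appear at each even order $x^{2k}$ and spoil the clean prefactor $(1-\lambda x^2/4)^2$. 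This is a special instance of the Fefferman-Graham expansion in which the rigidity of constant conformal rescaling of an Einstein metric allows the polynomial form to close up to the indicial order $n-1$.
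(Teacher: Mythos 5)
Your proposal is correct and follows essentially the same route as the paper: first verify that $x^{-2}[dx^2\oplus(1-\tfrac{\lambda}{4}x^2)^2h_0]$ is exactly Einstein via (\ref{eq2.4}) and (\ref{eq2.5}), then use order-by-order uniqueness of the expansion (indicial roots $0$ and $n-1$) to force agreement below order $x^{n-1}$ and tracefreeness of the coefficient at order $x^{n-1}$; the paper simply cites this order-by-order determination as well known (referring to \cite{BMW1}) where you carry it out explicitly. One small correction: in (\ref{eq2.6}) the term $-x^2\tr_h h''$ also contributes $-(n-2)x^{n-1}\tr_{h_0}\theta$, so the net coefficient of $x^{n-1}\tr_{h_0}\theta$ in $A$ is $1$ rather than $n-1$ --- still nonzero, so your conclusion $\tr_{h_0}\theta=0$ stands.
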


\begin{proof}
Using equations (\ref{eq2.4}, \ref{eq2.5}), one can check that whenever $\ric_{h_0}=\lambda (n-2) h_0$ then $E_g=0$ for $g$ given by
\begin{equation}
\label{eq2.9}
g=x^{-2} \left ( dx^2\oplus h_x \right )=x^{-2}\left [ dx^2 \oplus \left ( \left ( 1-\frac{\lambda}{4}x^2 \right )^2 h_0\right ) \right ]\ .
\end{equation}
Thus, $g$ is Einstein. It is well-known that if the Einstein equations are applied order-by-order up to the order required by the APE condition, $h_x$ is uniquely determined by $h_0$ at all orders below $x^{n-1}$, and $tr_{h_0}\theta$ is also uniquely determined (see, e.g., \cite{BMW1}). Therefore, if $g$ as in (\ref{eq2.1}) is an arbitrary APE and $h_0$ is Einstein and normalized as above, all terms in $g$ of order below $x^{n-1}$ must agree with those in (\ref{eq2.9}), and the coefficient of the order $x^{n-1}$ term in $h_x$ must be tracefree (with respect to $h_0$) as dictated by (\ref{eq2.8}).
\end{proof}

In passing, we recall the well-known fact that the Einstein equations, when applied to (\ref{eq2.7}), do not determine the tracefree part of $\theta$, which is free data for the Einstein equations and sometimes called the \emph{Neumann data}. The term \emph{Dirichlet data} refers to $h_0$.

There is no term of the form $x^{n-1}\log x$ in (\ref{eq2.9}), and thus none in (\ref{eq2.7}) either. An arbitrary Poincar\'e-Einstein metric possesses only a polyhomogeneous expansion in $x$ and could have such a log term. Then the coefficient of $x^{n-1}\log x$ term is called the \emph{ambient obstruction tensor}. If this tensor were not to vanish, $g$ would not be \emph{smoothly} conformally compactifiable at $x=0$. However, we have chosen that $h_0$ is Einstein, and then for Poincar\'e-Einstein manifolds the ambient obstruction tensor always vanishes---this is true even if $h_0$ is only conformal to an Einstein metric. Furthermore, in the Poincar\'e-Einstein case if this tensor vanishes then no higher order terms with logarithms appear, so the polyhomogeous expansion of $h_x$ is in fact a Maclaurin series and there is no obstruction to a smooth compactification. Then for APEs, as with Poincar\'e-Einstein metrics, if $g$ as given by (\ref{eq2.1}) is an APE metric and $h_0$ is Einstein or conformally Einstein then $g$ is has vanishing ambient obstruction tensor. However, because APEs do not have to satisfy the Einstein equations at higher order, and arbitrary APE with vanishing obstruction tensor need not have an arbitrarily smooth conformal compactification. Higher order log terms can appear.

The next lemma shows that the scalar curvature of an APE metric falls off slightly faster than $|E|$ at conformal infinity.

\begin{lemma}\label{lemma2.3}
Let $(M,g)$ be APE with $h_{0}$ an Einstein metric obeying $\ric_{h_0}=\lambda (n-2)h_0$ with $\lambda \in \{-1,0,1\}$, so that $g$ is given by (\ref{eq2.7}). Then
\begin{equation}
\label{eq2.10}
A:=R_g+n(n-1)\in {\mathcal O}(x^{n+1})\ ,
\end{equation}
so $A^{(n)}(0)=0$.
\end{lemma}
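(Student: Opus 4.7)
The approach is direct substitution of the expansion from Lemma \ref{lemma2.2} into the formula (\ref{eq2.6}) for $A$, combined with careful bookkeeping of orders in $x$. Let $\tilde h_x := (1-\frac{\lambda}{4}x^2)^2 h_0$ denote the ``Einstein background'' factor appearing in Lemma \ref{lemma2.2}, and write $h_x = \tilde h_x + \eta_x$ with $\eta_x = \frac{x^{n-1}}{n-1}\theta + \mathcal{O}(x^n)$ and $\tr_{h_0}\theta = 0$. The metric $\tilde g := x^{-2}(dx^2 + \tilde h_x)$ is exactly Poincar\'e--Einstein (as observed in the proof of Lemma \ref{lemma2.2}), so $A[\tilde h] \equiv 0$, and the task reduces to controlling $A[h] - A[\tilde h]$ in terms of $\eta$.

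First I would expand $A[\tilde h + \eta]$ to linear order in $\eta$ using (\ref{eq2.6}). The quadratic-in-$\eta$ terms carry orders at worst $\eta^2, (\eta')^2, \eta\eta''$ and, after being multiplied by the explicit factors of $x$ or $x^2$ in (\ref{eq2.6}), are bounded by $x^{2n-2}$, which is $\mathcal{O}(x^{n+1})$ for $n\ge 3$. Thus only the linear contribution $\delta A|_{\tilde h}[\eta]$ needs fine tracking. Each piece of that linearization carries an explicit $x$ or $x^2$ prefactor; expanding $\tilde h_x^{-1} = (1-\frac{\lambda}{4}x^2)^{-2}h_0^{-1}$ in powers of $x^2$ and using $\eta = \mathcal{O}(x^{n-1})$, $\eta' = \mathcal{O}(x^{n-2})$, $\eta'' = \mathcal{O}(x^{n-3})$, I would show that every term in $\delta A[\eta]$ is $\mathcal{O}(x^{n+1})$ except for the two ``top-order'' trace contributions
\begin{equation*}
(n-1)x\cdot\tr_{\tilde h}\eta' \quad \text{and}\quad -x^2\cdot\tr_{\tilde h}\eta''.
\end{equation*}
The other terms ($-\frac34 \delta |h'|^2$, $\frac14 \delta(\tr_h h')^2$, $\delta R_h$, and the ``$\eta$ inside $h^{-1}$'' pieces of $\delta(\tr_h h')$ and $\delta(\tr_h h'')$) each pick up an extra power of $x$ relative to the naive count, either because they are quadratic in $h'\sim x$ or because the $\eta$-variation of $h^{-1}$ pairs $\eta\sim x^{n-1}$ with $\tilde h'\sim x$ or $\tilde h''\sim 1$ under prefactors of $x$ or $x^2$; in each case this yields at least $x^{n+1}$.

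The decisive step is to show the two surviving terms cancel at order $x^n$. Writing $\eta_x = \tfrac{x^{n-1}}{n-1}\theta + \sum_{k\ge n} h^{(k)}x^k$, the tracefree condition $\tr_{h_0}\theta = 0$ gives $\tr_{h_0}\eta_x = \mathcal{O}(x^n)$, with leading coefficient $h^{(n)}_{h_0}:=\tr_{h_0}h^{(n)}$. Consequently $\tr_{\tilde h}\eta' = n\,h^{(n)}_{h_0}\,x^{n-1} + \mathcal{O}(x^n)$ and $\tr_{\tilde h}\eta'' = n(n-1)\,h^{(n)}_{h_0}\,x^{n-2} + \mathcal{O}(x^{n-1})$, where corrections from replacing $\tr_{h_0}$ by $\tr_{\tilde h}$ contribute only at order $x^{n+1}$ or higher (again via the explicit $(1-\lambda x^2/4)^{-2}$ expansion). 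Combining,
\begin{equation*}
(n-1)x\,\tr_{\tilde h}\eta' - x^2\,\tr_{\tilde h}\eta'' = \bigl[n(n-1) - n(n-1)\bigr]h^{(n)}_{h_0}x^n + \mathcal{O}(x^{n+1}) = \mathcal{O}(x^{n+1}),
\end{equation*}
so $\delta A[\eta] \in \mathcal{O}(x^{n+1})$, hence $A \in \mathcal{O}(x^{n+1})$ and $A^{(n)}(0)=0$.

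The main obstacle is purely book-keeping: several terms in (\ref{eq2.6}) each produce formal $\mathcal{O}(x^n)$ contributions, and one must verify that all of them either individually vanish to the required order or conspire to cancel. The structural reason that this succeeds is the combination of (i) the tracefreeness $\tr_{h_0}\theta = 0$ supplied by Lemma \ref{lemma2.2}, which suppresses the trace of $\eta$ by one extra power of $x$, and (ii) the exact identity $(n-1)x\tr_{\tilde h}\tilde h' - x^2\tr_{\tilde h}\tilde h'' = \cdots$ guaranteed by the fact that $\tilde g$ is itself Einstein, which makes the single free datum $h^{(n)}_{h_0}$ drop out of $A$ at the critical order.
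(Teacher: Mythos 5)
Your proof is correct and follows essentially the same route as the paper's: both substitute the expansion \eqref{eq2.11} into the trace formula \eqref{eq2.6} and verify order by order that everything below order $x^{n+1}$ cancels. The paper's version simply asserts the three cancellations (the sub-$x^{n-1}$ orders, the order-$x^{n-1}$ terms proportional to $\tr_{h_0}\theta$, and the order-$x^n$ terms), whereas your decomposition $h_x=\tilde h_x+\eta_x$ about the exact Einstein background---with the quadratic-in-$\eta$ contributions dismissed as ${\mathcal O}(x^{2n-2})$ for $n\ge 3$ and the residual cancellation $(n-1)x\,\tr_{\tilde h}\eta'-x^2\tr_{\tilde h}\eta''={\mathcal O}(x^{n+1})$---makes the mechanism behind those assertions explicit.
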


\begin{proof}
As in (\ref{eq2.1}, \ref{eq2.7}), write $g$ as $g=x^{-2}(dx^2\oplus h_x)$ with
\begin{equation}
\label{eq2.11}
h_x=\left ( 1-\frac{\lambda}{4}x^2 \right )^2 h_0 +\frac{1}{(n-1)}x^{n-1}\theta+\frac{1}{n}x^n\kappa + {\mathcal O}(x^{n+1})\ .
\end{equation}
Plug this into (\ref{eq2.6}). In the resulting expression the terms of order less than $x^{n-1}$ cancel among themselves. Furthermore, the order $x^{n-1}$ terms have coefficient proportional to $\tr_h\theta$, so by equation (\ref{eq2.8}) they vanish. Then the terms of order $x^n$ also simply cancel among themselves.
\end{proof}

Smoothly conformally compactifiable APE metrics such as (\ref{eq2.7}) have the property that they have no terms odd in $x$ of order less than ${\mathcal O}(x^{n-3})$, so $h_x$ has no odd term of order less than ${\mathcal O}(x^{n-1})$. The even terms in (\ref{eq2.7}) below order ${\mathcal O}(x^{n-1})$ are fully determined by the APE condition, but this turns out not to be important for the discussion of conformal invariance later in this section and in the appendix.\footnote
{If $n$ is odd, a general APE metric $g$ can have a term $x^{n-1}\log x$ in the expansion of $h_x$, but the coefficient of this term is determined by $h_0$ and vanishes whenever $g$ is smoothly conformally compactifiable, including whenever $h_0$ is Einstein.}
We therefore define a more general class of metrics (\cite{BMW2}, \cite[cf Definition 2.3.3]{DGH}).

\begin{definition}\label{definition2.4}
A smoothly conformally compactifiable, asymptotically hyperbolic metric in normal form (\ref{eq2.1}) is \emph{partially even} in $x$ if
\begin{equation}
\label{eq2.12}
h_x = h_{(0)}+x^2h_{(2)}+\dots+x^{n-1}h_{(n-1)}+x^n h_{(n)} +{\mathcal O}(x^{n+1})\ .
\end{equation}

\end{definition}

That is, when $g$ is a partially even metric, $h_x$ has no term odd in $x$ of order less than ${\mathcal O}(x^{n-1})$; i.e., $h_x-h_{-x}\in {\mathcal O}(x^{n-1})$. The notion of being partially even is actually independent of the specific choice of special defining function $x$ in the definition (\cite[Proposition 2.3.4]{DGH} and Proposition \ref{proposition2.6}). Every APE metric is partially even.

\subsection{The Wang mass-energy}

\noindent The Wang mass was defined for APE metrics with round sphere conformal infinity and zero Neumann data in the original paper \cite{Wang}. However, it generalizes to other conformal boundaries and to more general APEs. The case of an APE metric with flat toroidal conformal infinity and zero Neumann data was studied in \cite{BW}.

\begin{definition}\label{definition2.5}
For metrics of the form (\ref{eq2.1}, \ref{eq2.11}) with $h_0$ a closed Einstein $(n-1)$-metric normalized so that $\ric(h_0)=(n-2)\lambda h_0$, the \emph{Wang mass-energy} (or simply \emph{mass}) is
\begin{equation}
\label{eq2.13}
m\equiv m[g]:=\int_{\partial_{\infty}M}\tr_{h_0}\kappa dV(h_0) \ ,
\end{equation}
where $dV(h_0)$ is the volume element for the metric $h_0$. The quantity $\mu:=\tr_{h_0}\kappa$ is called the \emph{mass aspect function}.
\end{definition}

Wang's original definition was given in the special case of $\lambda=1$, $n=3$, and $\partial_{\infty}M\equiv S^2$. Near infinity, one recovers the isometry group of hyperbolic 3-space, $\SO(3,1)$, or $\SO(n,1)$ in arbitrary dimensions, in the approximation of small defining function $x$. Now in the related context of asymptotically anti-de Sitter $(n+1)$-dimensional spacetimes, there is a conserved charge for each element of the vector space of asymptotic Killing fields, i.e., the Lie algebra $\so(n,2)$ \cite{AshtekarMagnon}. Similarly, in the current setting, one can define a mass integral associated to each element of a vector space \cite{CH} (of dimension $n+1$ in this case, it turns out). These integrals can be combined as an $(n+1)$-vector whose signed $\so(n,1)$-invariant norm yields the invariant that Wang called mass. The sign is given by the mass integral which is the ``timelike'' component of the vector.\footnote
{Explicitly, setting $\lambda=1$ and $n=3$, the substitution $\sinh r=\frac{x}{1-x^2/4}$ yields Wang's definition \cite[pp 275--276]{Wang} when $\kappa$ is constant over conformal infinity. Wang differs from most work in the field, including the present paper, by using the term \emph{special defining function} to describe this coordinate $r$ rather than the Gaussian normal coordinate $x$.}
It is this timelike component that appears in the definition above and which we call \emph{mass-energy} or simply \emph{mass}. The other mass integrals measure the ``dipole moment'' of the mass aspect $\kappa$. They vanish, for example, when $\kappa$ is constant on conformal infinity.

Such considerations are not needed when $\lambda=0$, which will be our main interest here, nor are they needed when $\lambda=-1$. In these cases the vector space is one-dimensional and the mass integral above is the only one up to scaling \cite[Section 3]{CH}, hence we simply use the term \emph{mass} for this integral. Also, \cite{ACG} used the term \emph{mass} for this integral in their study of the $\lambda=1$ case under the assumption that $\kappa$ had constant sign.

In dimension $n=3$ with $\lambda=1$, Wang \cite[equation (33)]{Wang} noticed that the mass aspect has a nice weighted invariance property with respect to conformal changes in the metric on the boundary at infinity. The following result generalizes this observation in a number of ways.

\begin{proposition}\label{proposition2.6}
Let $n=\dim M$ be odd and let $g$ be partially even in the special defining function $x$ inducing the metric $h_0$ on $\partial_{\infty} M$, so that $g=\frac{1}{x^2}\left ( dx^2\oplus h_x\right )$ and
\begin{equation}
\label{eq2.14}
\begin{split}
h_x =&\, h_{(0)}+x^2h_{(2)}+\dots+x^{n-1}h_{(n-1)}+x^n h_{(n)} +{\mathcal O}(x^{n+1})\\
=&\, h_{(0)} +x^2h_{(2)}+\dots+\frac{1}{(n-1)}\theta x^{n-1} +\frac{1}{n}\kappa x^n +{\mathcal O}(x^{n+1}) \ .
\end{split}
\end{equation}
Let ${\hat x}$ be another special defining function, inducing metric ${\hat h}_{\hat x}$ such that ${\hat h}_0:=e^{2\omega_0}h_0$; i.e., ${\hat h}_0$ is conformal to $h_0$ on $\partial_{\infty} M$. Then ${\hat h}_{\hat x}$ is partially even in ${\hat x}$ and $\mu:=\tr_{h_{(0)}}\kappa$ is conformally invariant with weight $-n$; i.e., ${\hat \mu}:=\tr_{{\hat h}_{(0)}}{\hat \kappa}=e^{-n\omega_0}\mu$.
\end{proposition}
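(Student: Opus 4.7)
The strategy is to parametrize the change of special defining function by writing $\hat x = x\,e^{\omega(x,y)}$ with $\omega|_{x=0} = \omega_0$, solve the eikonal PDE that $\omega$ must satisfy, and use the resulting change of collar coordinates to read off the expansion of $\hat h_{\hat x}$ from that of $h_x$. Working in the normal-form coordinates $(x,y)$, the condition $|d\hat x|^2_{\hat x^2 g} = 1$ reduces to
\begin{equation*}
(1+x\,\partial_x\omega)^2 + x^2\,|d_y\omega|^2_{h_x} = 1 \ ,
\end{equation*}
equivalently $\partial_x\omega = -\tfrac{x}{2}\bigl[(\partial_x\omega)^2 + |d_y\omega|^2_{h_x}\bigr]$. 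Substituting a formal expansion $\omega = \omega_0 + \sum_{k\ge 1}x^k\omega_{(k)}(y)$ and matching powers yields a recursion expressing each $\omega_{(k)}$ in terms of the $h_{(j)}$ and $\omega_{(j)}$ with $j<k$.

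The first claim is that $\omega$ inherits partial evenness from $h_x$. The partial-evenness hypothesis forces $h_{(j)}=0$ for every odd $j<n-1$ (and, since $n$ is odd, also for $j=n-2$), so an induction beginning from $\omega_{(1)}=0$ (automatic from the $x^0$-coefficient) shows $\omega_{(k)}=0$ for every odd $k<n-1$. Inverting $\hat x = xe^\omega$ then produces a partially-even expansion $x = \hat x\,e^{-\omega_0}\bigl(1+\mathcal{O}(\hat x^2)\bigr)$, and fixing the tangential coordinate $\hat y^A$ by requiring the cross terms $d\hat x\,d\hat y^A$ to vanish (equivalently, by $\hat x^2 g$-geodesic transport from $\partial_\infty M$) completes the change of coordinates on the collar. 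Substituting into $g$ and comparing with $\hat x^{-2}(d\hat x^2\oplus \hat h_{\hat x})$ gives the explicit expression
\begin{equation*}
\hat h_{\hat x,\hat A\hat B} = e^{2\omega}\bigl[\,x_{,\hat A}\,x_{,\hat B} + h_{x,AB}\,y^A_{,\hat A}\,y^B_{,\hat B}\bigr] \ ,
\end{equation*}
whose $\hat x$-expansion inherits partial evenness. This proves the first assertion.

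For the conformal-weight statement I would expand the above formula to order $\hat x^n$. At order $\hat x^0$ one recovers $\hat h_{(0)}=e^{2\omega_0}h_0$, as required. At order $\hat x^n$, partial evenness of the coordinate change forces all odd-index intermediate contributions to vanish, and the leading substitution $x = \hat x e^{-\omega_0}$ turns the $\tfrac{1}{n}x^n\kappa_{AB}$ contribution in $h_{x,AB}$ into $\tfrac{1}{n}\hat x^n e^{(2-n)\omega_0}\kappa_{AB}$ after inclusion of the prefactor $e^{2\omega_0}$. The remaining corrections at this order---from $x_{,\hat A}x_{,\hat B}$ and from lower $h_{(k)}$ paired with higher-order corrections to $x(\hat x,\hat y)$ and $y(\hat x,\hat y)$---must be shown to be $h_0$-tracefree, so that $\hat\kappa_{\hat A\hat B} = e^{(2-n)\omega_0}\kappa_{AB} + (h_0\text{-tracefree})$. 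Tracing with $\hat h_{(0)}^{-1}=e^{-2\omega_0}h_0^{-1}$ then produces $\hat\mu = e^{-n\omega_0}\mu$.

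\textbf{Main obstacle.} The principal technical burden is the order-$\hat x^n$ bookkeeping: verifying that all sub-leading contributions to $\hat\kappa$ are either $h_0$-tracefree or cancel against one another in $\tr_{\hat h_{(0)}}\hat\kappa$. This is where the assumption that $n$ is odd, together with partial evenness, becomes decisive: partial evenness localizes $\kappa$ at the first allowed odd slot $x^n$, and the induced partial evenness of $\omega$ forbids odd intermediate $\omega_{(k)}$ that would otherwise mix the lower coefficients of $h_x$ into $\tr_{\hat h_{(0)}}\hat\kappa$ at this order. Carrying out these cancellations explicitly, using the eikonal recursion for the $\omega_{(k)}$, is where I would expect to spend most of the effort.
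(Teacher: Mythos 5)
Your plan follows the same route as the paper's proof: set $\hat x = e^{\omega}x$, derive the eikonal relation $2\omega' + x\bigl(\omega'^2 + h_x^{-1}(d\omega,d\omega)\bigr)=0$ from $|d\hat x|^2_{\hat x^2 g}=1$, propagate partial evenness from $h_x$ to $\omega$ by induction on the recursion, and then read off $\hat\kappa$ from $\hat h_{\hat x}=e^{2\omega}(dx^2+h_x)-d\hat x^2$. However, there is a concrete gap in the stated induction range: you conclude $\omega_{(k)}=0$ only for odd $k<n-1$, i.e.\ up to $k=n-2$. That stops exactly one step short of what the trace computation needs. Since $n-1$ is even, the next odd coefficient is $\omega_{(n)}$ itself, and a nonzero $\omega_{(n)}$ would inject the term $2\omega_{(n)}e^{2\omega_0}x^n h_{(0)}$ into $e^{2\omega}h_x$ at order $\hat x^n$ --- a \emph{pure-trace} contribution to $\hat\kappa$ proportional to $\omega_{(n)}h_{(0)}$, which would shift $\tr_{\hat h_{(0)}}\hat\kappa$ and destroy the weight-$(-n)$ law. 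The paper's version of the induction (its equation for $\omega^{(2k+1)}(0)$) runs through all odd orders $2k+1<n+1$ and so establishes $\omega^{(n)}(0)=0$; your recursion does extend to $k=n$ by the same parity count (every product contributing at that order carries either an odd $\omega$-coefficient of order $\le n-2$ or an odd $h$-coefficient of order $<n-1$, all of which vanish), but you must actually carry the induction to that order.

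Once $\omega$ is known to have no odd terms below order $x^{n+2}$, the ``main obstacle'' you anticipate largely evaporates, and your hedge that the corrections ``must be shown to be $h_0$-tracefree'' can be replaced by the stronger and easier statement that they vanish. Indeed $\omega$, $d\omega$, and the inversion $x=e^{-\omega_0}\hat x(1+\text{even})+{\mathcal O}(\hat x^{n+3})$ are all even in $\hat x$ modulo ${\mathcal O}(\hat x^{n+2})$, so the correction terms $\hat x^2 d\omega\otimes d\omega$ and $-2\hat x\, d\hat x\, d\omega$, as well as the re-expansion of the even coefficients $h_{(2k)}x^{2k}$ in powers of $\hat x$, contribute only \emph{even} powers of $\hat x$ below order $\hat x^{n+2}$ and hence nothing at the odd order $\hat x^n$. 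Therefore $\hat\kappa = e^{-(n-2)\omega_0}\kappa$ on the nose, with no residual tracefree piece to track, and $\hat\mu = e^{-2\omega_0}\tr_{h_{(0)}}\bigl(e^{-(n-2)\omega_0}\kappa\bigr)=e^{-n\omega_0}\mu$ follows at once. This is exactly how the paper concludes.
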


This is a deviation from the main direction of our discussion, since Conjecture \ref{conjecture1.1} breaks conformal invariance down to homothetic invariance by requiring a flat boundary. We therefore relegate the proof to an appendix. The proposition does not elevate $\mu$ to the status of a weighted conformal invariant of the boundary since nothing in the assumptions determines $\mu$ in terms of the boundary conformal class $[h_{(0)}]$ alone. The statement is merely that \emph{if conformal variations are the only variations allowed}, then the resulting change in $\mu$ is simple when $g$ is partially even and $n$ is odd.

Now say that we attempt to promote $\mu$ to a weighted conformal invariant of the boundary by imposing the Einstein equations (at least to sufficient order in powers of $x$) in order to fix $\kappa$ (or at least its trace). This strategy would work, provided that $\kappa$ is only dependent on the conformal class of $h_0$ and not dependent on the Neumann data (the tracefree part of $\theta^{\rm TF}$). The next proposition shows that this strategy does produce a conformal invariant at least when the conformal class of $h_0$ admits an Einstein metric, but in that case the invariant is always zero. This is well-known when conformal infinity is a spherical space form \cite{AnderssonDahl} and $(M,g)$ admits the appropriate spinor structure for the Witten technique \cite{Witten1}. The proposition holds in either even or odd dimension $n$, but if $n$ is even then the special defining function $x$ used in the expansion which defines the mass should be the one corresponding to the Einstein metric in $[h_0]$.

\begin{proposition}\label{proposition2.7}
Let $(M,g)$ be Poincar\'e-Einstein and let the induced conformal class on $\partial_{\infty}M$ admit an Einstein metric $\ric(h_0)=(n-2)\lambda h_0$. Then the mass (as computed with respect to $h_0$ if $n$ is even) of $(M,g)$ is zero.
\end{proposition}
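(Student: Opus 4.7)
The plan is to prove the stronger, pointwise statement that the mass aspect $\mu=\tr_{h_0}\kappa$ vanishes on $\partial_\infty M$; integrating over $\partial_\infty M$ then gives $m=0$. Since $(M,g)$ is Poincar\'e-Einstein and $h_0$ is Einstein with $\ric(h_0)=\lambda(n-2)h_0$, Lemma \ref{lemma2.2} puts the metric in the normal form (\ref{eq2.7}), so in coordinates
\[
h_x = \phi(x)\,h_0 + \tfrac{1}{n-1}x^{n-1}\theta + \tfrac{1}{n}x^n\kappa + \mathcal{O}(x^{n+1}),\qquad \phi(x):=\left(1-\tfrac{\lambda}{4}x^2\right)^2,
\]
with $\tr_{h_0}\theta=0$ by (\ref{eq2.8}). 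When $n$ is odd the potential $x^{n-1}\log x$ term is killed by the Einstein assumption on $h_0$, as noted after Lemma \ref{lemma2.3}, so this expansion is valid in both parities of $n$; for even $n$ it is precisely the one singled out in the statement of the proposition, associated with the Einstein representative of $[h_0]$.

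To extract information about $\tr_{h_0}\kappa$ I would use the radial component of the full Einstein equation, $E_{11}=0$, in the form (\ref{eq2.4}). Note that the scalar-trace equation $A=0$ is useless here: by Lemma \ref{lemma2.3} every APE metric already has $A\in\mathcal{O}(x^{n+1})$, so any $\kappa$-contributions to $A$ cancel automatically. Differentiating the expansion above, expanding $h_x^{-1}=\phi^{-1}h_0^{-1}+\mathcal{O}(x^{n-1})$, and collecting the coefficient of $x^{n-2}$ in $E_{11}$, three things occur. First, the contributions built entirely from $\phi$ and $h_0$ cancel among themselves, because the model metric $x^{-2}(dx^2\oplus\phi\,h_0)$ is Einstein and already satisfies $E_{11}=0$. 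Second, every contribution involving $\theta$ carries a factor $\tr_{h_0}\theta=0$, whether it arises directly from the $\theta$-part of $\tr_{h_x}h_x''$ or $\tr_{h_x}h_x'$, from the $\theta$-correction to $h_x^{-1}$ at order $x^{n-1}$, or from the cross term between $\phi'\,h_0$ and $x^{n-2}\theta$ in $|h_x'|^2_{h_x}$. Third, the surviving $\kappa$-dependent pieces combine to
\[
-\tfrac{n-1}{2}\tr_{h_0}\kappa + \tfrac{1}{2}\tr_{h_0}\kappa = -\tfrac{n-2}{2}\tr_{h_0}\kappa.
\]
Since $n\ge 3$, the equation $E_{11}=0$ forces $\tr_{h_0}\kappa\equiv 0$ pointwise on $\partial_\infty M$, and hence $m=0$.

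The main obstacle is essentially bookkeeping: one must check that the $|h_x'|^2_{h_x}$ term is under control (its purely-background part $\phi^{-2}(\phi')^2(n-1)$ already starts at order $x^2$, and one must verify that it participates in the background cancellation at order $x^{n-2}$ for all $n\ge 3$, which I spot-checked for $n=3,4,6$ where the $\lambda^2$ and $\lambda^3$ background pieces indeed cancel), and that the corrections to $h_x^{-1}$ entering at orders $x^{n-1}$ from $\theta$ and $x^n$ from $\kappa$ only re-enter the $x^{n-2}$ coefficient by multiplying background factors, in which case the surviving trace is always $\tr_{h_0}\theta=0$. Once these verifications are completed, the conclusion follows.
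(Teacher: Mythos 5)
Your proposal is correct, and the key coefficient computation checks out: at order $x^{n-2}$ in $E_{11}$, the term $-\frac12\tr_{h_x}h_x''$ contributes $-\frac{n-1}{2}\tr_{h_0}\kappa$, the term $\frac{1}{2x}\tr_{h_x}h_x'$ contributes $+\frac12\tr_{h_0}\kappa$, the $\vert h_x'\vert^2_{h_x}$ cross term with $\kappa$ is ${\mathcal O}(x^n)$ because $\phi'={\mathcal O}(x)$, every $\theta$-contribution at that order carries a factor $\tr_{h_0}\theta=0$, and the pure-background terms cancel because the model metric (\ref{eq2.9}) is Einstein. However, this is a genuinely different route from the paper's. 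The paper works with the tangential components $E^{\perp}$ rather than the radial one: multiplying (\ref{eq2.5}) by $-2x$, differentiating $n-1$ times and evaluating at $x=0$ yields an identity expressing the full tensor $\kappa$ (not merely its trace) in terms of $h_{(0)},\dots,h^{(n-2)}(0)$ alone, i.e.\ independently of the Neumann data $\theta$; the conclusion then follows by comparison with the explicit model (\ref{eq2.9}), which has the same Dirichlet data and $\kappa=0$. The paper's version buys a little more --- the vanishing of the whole tensor $\kappa$, and a clean identification of exactly how many orders of the Einstein equation are needed (Remark \ref{remark2.8}, $\vert E\vert\in{\mathcal O}(x^{n+1})$, under which the components of $E$ are ${\mathcal O}(x^{n-1})$, so your coefficient extraction at order $x^{n-2}$ would go through as well) --- whereas your version is more economical for the mass itself, since a single scalar equation delivers $\mu=\tr_{h_0}\kappa\equiv 0$ pointwise without the detour through Neumann-data independence. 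Both arguments ultimately rest on the same two pillars: the normal form of Lemma \ref{lemma2.2} with $\tr_{h_0}\theta=0$, and the fact that the background metric (\ref{eq2.9}) exactly satisfies the Einstein equations.
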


\begin{remark}\label{remark2.8}
The Poincar\'e-Einstein condition can be relaxed. All that is needed for Proposition \ref{proposition2.7} is that $(M,g)$ be Poincar\'e-Einstein to one more order in the expansion of $h_x$ than generic APE metrics; i.e., $|E|:=\vert \ric + (n-1)g\vert \in {\mathcal O}(x^{n+1})$.
\end{remark}

Again, the proof is to be found in the appendix.

In what follows, we obviously do not want to impose the Einstein equations beyond order $x^n$, since then the mass would be zero. Instead, we impose them only to order $x^n$, so that $g$ is APE. We will as well impose a global condition on scalar curvature and attempt to show that, under these conditions, a complete $k=0$ mass minimizing metric exists. To show rigidity, we must show that it obeys certain definite equations to all orders in $x$, but these will not be the Einstein equations, and consequently the minimal mass will not be zero (indeed, it will be negative!). These equations are the subject of the next subsection.

\subsection{Static metrics}

\noindent We recall briefly that an asymptotically hyperbolic metric $g$ on an $n$-manifold is said to be a \emph{static Einstein metric} if there is a positive function $N$, called the \emph{lapse}, such that the pair $(g,N)$ obeys the system
\begin{eqnarray}
\label{eq2.15} N{\ric}&=& \hess N -nNg\ ,\\
\label{eq2.16} \Delta N &=& nN\ .
\end{eqnarray}
Here the Laplacian $\Delta:= \tr_g \hess$ where $\hess$ is the Hessian, and $\ric=\ric_g$ is the Ricci tensor of $g$. Asymptotically hyperbolic static Einstein metrics have constant negative scalar curvature. We have rescaled $g$ here so that its scalar curvature obeys $R_g+n(n-1)=0$. These equations are equivalent to the equation
\begin{equation}
\label{eq2.17}
\hess N -g\Delta N-N\ric=0\ .
\end{equation}
As well as being called lapse functions, solutions $N$ of (\ref{eq2.17}) are sometimes called \emph{static potentials}.

These equations are singular at $N=0$. On domains that do not contain singular points, one can take $N=:e^u$. Then the static Einstein equations take the form
\begin{eqnarray}
\label{eq2.18} 0&=&\ric - du\otimes du -\hess u +ng\ ,\\
\label{eq2.19} 0&=&\Delta u +|du|^2-n\ .
\end{eqnarray}

Solutions of these equations describe $(n+1)$-dimensional spacetimes with metric $-N^2dt^2\oplus g$, but can also describe $(n+1)$-dimensional Riemannian manifolds $N^2dt^2\oplus g$ where $\frac{\partial}{\partial t}$ is a hypersurface-orthogonal Killing vector field, possibly with fixed points where $N=0$; these are always totally geodesic hypersurfaces \cite[Lemma 2.1.(i)]{GM} and are called \emph{bolts}. If the fixed point set is non-empty, the metric is called \emph{locally static}, otherwise it is \emph{globally static}. Smoothness at a bolt requires the domain of $t$ to be a circle of a certain definite circumference.

\begin{proposition}\label{proposition2.9}
Let $(M,g,N)$ solve the static Einstein equations, where $(M,g)$ is an APE $n$-manifold such that $g=\frac{1}{x^2}(dx^2\oplus h_x)$, where $\ric_{h_0}=(n-2)\lambda h_0$, $\lambda \in \{ -1,0,1\}$, and $xN\to 1$ as $x\to 0$. Then $x^2g$ extends smoothly to $x=0$ and
\begin{equation}
\label{eq2.20}
\begin{split}
h_x=&\, \left [ \left ( 1-\frac14 \lambda x^2\right )^2 h_0 +\frac{1}{n}\kappa x^n\right ] \oplus x^2N^2d\tau^2 +{\mathcal O}(x^{n+1})\ ,\\
N=&\, \frac{1}{x}\left ( 1+\frac14 \lambda x^2 \right )-\frac{\mu}{2n} x^{n-1} +{\mathcal O} (x^n)\ , \\
\mu =&\, \tr_{h_0}\kappa\ .
\end{split}
\end{equation}
\end{proposition}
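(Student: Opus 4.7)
The natural route is the Kaluza-Klein lift. Set $\hat{M} := M \times S^1_\tau$ with $\hat{g} := g + N^2 d\tau^2$; the warped-product Ricci identities convert (\ref{eq2.15})--(\ref{eq2.16}) into the single equation $\ric_{\hat g} = -n \hat g$, so $\hat g$ is itself Einstein in dimension $n{+}1$. Since $dx \perp d\tau$ and $x$ is special for $g$, one checks $|dx|_{x^2 \hat g}^2 = |dx|_{x^2 g}^2 = 1$, making $x$ a special defining function for $\hat g$ as well. Combined with $xN \to 1$, this yields $\hat g = x^{-2}(dx^2 \oplus \hat h_x)$ with $\hat h_x = h_x \oplus (xN)^2 d\tau^2$ and continuous boundary metric $\hat h_0 = h_0 \oplus d\tau^2$, so $\hat g$ is Poincar\'e-Einstein on the $(n{+}1)$-dimensional lift.

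I would then carry out a Fefferman-Graham-type order-by-order analysis of $\ric_{\hat g} + n \hat g = 0$, analogous to the proof of Lemma~\ref{lemma2.2} but one dimension higher. Substituting the warped ansatz $\hat h_x = h_x \oplus (xN)^2 d\tau^2$ and exploiting $\partial_\tau$-invariance reduces Einstein's equations to a coupled system of ODEs in $x$ for the $(n{-}1)$-tensor $h_x$ and the scalar $xN$, with initial data $h_0$ (Einstein) and $(xN)|_{x=0}=1$. The inductive solution produces the expansion of $xN$, from which the stated form of $N$ is extracted by a square root; the expansion of $h_x$, in which the warping factor $(1-\lambda x^2/4)^2$ multiplies $h_0$ exactly as in Lemma~\ref{lemma2.2}; and the identity $\mu = \tr_{h_0}\kappa$, obtained by tracing the first free coefficient in the Fefferman-Graham expansion at order $x^n$. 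A sanity check at second order: $\hat h_{(2)} = -\tfrac{\lambda}{2} h_0 + \tfrac{\lambda}{2} d\tau^2$ reproduces $-\tfrac{\lambda}{2} x^2 h_0$ on the $h_0$-factor and the first correction to $(xN)^2$ on the $\tau$-factor.

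The main obstacle is that $\hat h_0 = h_0 \oplus d\tau^2$ is \emph{not} Einstein as an $n$-metric when $\lambda \neq 0$, so Lemma~\ref{lemma2.2} cannot be invoked for the lift as a black box. One must check separately that the ambient obstruction tensor of $\hat h_0$ vanishes, so that no $x^n \log x$ term arises in the expansion of $\hat h_x$; the product structure of $\hat h_0$ together with $h_0$ being Einstein is enough for this. Once logs are ruled out below the relevant order, smoothness of $x^2 \hat g$ (and hence of $x^2 g$) at $x=0$ is automatic, and the expansion is polynomial up to the stated error. An alternative that avoids the lift entirely is to work directly with (\ref{eq2.17})--(\ref{eq2.19}), using the APE condition and $xN \to 1$ as Cauchy data and solving inductively for the Taylor coefficients order by order in $x$; the bookkeeping is messier but bypasses the non-Einstein boundary issue.
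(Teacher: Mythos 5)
Your proposal is correct and follows essentially the same route as the paper: lift to the static $(n+1)$-dimensional Poincar\'e--Einstein metric $N^2d\tau^2\oplus g$ with $x$ still a special defining function, run the Fefferman--Graham expansion with boundary data $d\tau^2\oplus h_0$, and extract the $x^{n-1}$ coefficient of $N$ from the vanishing of the trace of the order-$x^n$ coefficient. The only difference is in handling the obstruction tensor of the non-Einstein product boundary metric: where you assert the product structure suffices, the paper exhibits the explicit log-free model filling $h_x=(1-\tfrac{\lambda}{4}x^2)^2h_0$, $N=\tfrac1x+\tfrac{\lambda}{4}x$ and invokes the fact that the obstruction depends only on the boundary metric --- the same fact, established slightly more concretely.
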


The proof is given in the appendix. The $\lambda=0$ version of this result appeared in \cite{GW}. We do not need that $(M,g,N)$ is a smooth global solution of the static Einstein equations. All that is needed is that $N^2d\tau^2 \oplus g$ obeys the Einstein equations to order $x^n$ inclusive on a neighbourhood of conformal infinity---this is the APE condition on an $(n+1)$-manifold---and that $\frac{\partial}{\partial \tau}$ is a Killing field (to sufficient order) on this neighbourhood. The importance of this result is that the fall-off conditions imply that the mass aspect $\mu=\tr_{h_0}\kappa$ of $g$ appears as it does above in the expression for $N$. In the AdS/CFT correspondence, the coefficients of the order $x^n$ terms in the Fefferman-Graham expansion \cite{FG1, FG2} of an $(n+1)$-dimensional Poincar\'e-Einstein metric combine to give the vacuum expectation value of the boundary CFT stress-energy tensor. The coefficient of the order $x^n$ term in the lapse function is then the \emph{vacuum expectation value of the holographic energy density}.

\begin{remark}\label{remark2.10}
Under the conditions of Proposition \ref{proposition2.9}, Wang's mass aspect equals the vacuum expectation value of the holographic energy density.
\end{remark}

Our eventual interest will be in complete manifolds with a single asymptotic end on which $N$ grows like $1/x$, hence the condition above (indeed, $1/N$ is often used as a defining function, though not a special defining function in our sense; see, e.g., \cite{CS, GSW1, GSW2, GSW3}). We will also take special interest in the case where the fixed point set of $\frac{\partial}{\partial \tau}$ is empty, so that $N$ has no zeroes. Since $N$ grows at infinity, it therefore will be bounded away from zero on $M$ and $u$ will be bounded below.

\subsection{AdS Solitons}

For $\dim M=n\ge 3$, consider the family of metrics on $M$ given by
\begin{equation}
 \label{eq2.21}
ds^2=\frac{dr^2}{r^2\left ( 1-\frac{1}{r^n}\right )}
+r^2\left [ \left ( 1-\frac{1}{r^n}\right ) d\xi^2
+\sum_{i=3}^{n} d\phi_i^2\right ] \ .
\end{equation}
The domains of the coordinates are $r\in [1,\infty)$, $\xi\in [0,4\pi/n]$, and $\phi_i\in [0,a_i]$ where $0<a_3\le\dots\le a_n$. We identify $\phi_i\sim\phi_i+a_i$. The locus $r=1$ is the \emph{central torus} or, if $n=3$, the \emph{central circle}. The domain of $\xi$ is chosen so that the metric is smooth on the central torus. Then (\ref{eq2.21}) represents a family (parametrized by the $a_i$) of smooth metrics on ${\mathbb R}^2\times T^{n-2}$, where $T^{n-2}$ is an $(n-2)$-torus. A parameter, sometimes denoted $r_0$ or $M$, often appears in descriptions of the metric (\ref{eq2.21}) \cite{HM}, but has no significance and can be removed by rescaling the coordinates. Another parameter, $\ell$, the {\it radius of curvature at infinity}, also sometimes appears but can be removed by homothetic rescaling of the metric. We will rescale all asymptotically hyperbolic metrics so that sectional curvatures approach $-1$ on asymptotic ends.

Another parametrization of the metrics (\ref{eq2.21}) is in terms of distance $\rho$ from the central torus. Then we have
\begin{equation}
\label{eq2.22}
ds^2 = d\rho^2+\left (\cosh\frac{n\rho}{2}\right )^{4/n} \left [ \left ( \tanh \frac{n\rho}{2}\right )^2 d\xi^2 +\sum_{i=3}^n d\phi_i^2 \right ] \ ,
\end{equation}
where $\rho\in [0,\infty)$.

We will refer to the above metrics as \emph{Horowitz-Myers geons}. They are asymptotically (locally) hyperbolic and have scalar curvature $R=-n(n-1)$. It is an easy exercise to find a special defining function for a Horowitz-Myers geon. The result is\footnote
{In many papers, the lapse is used as a defining function for conformal infinity \cite{CS}. Here that would amount to the choice $x=r$, which is subtlely different from special defining function (i.e., Gaussian normal coordinate) choice.}
\begin{equation}
\label{eq2.23}
x:=4^{1/n}\left [r^{n/2}-\sqrt{r^n-1} \right ]^{2/n}=4^{1/n}r\left [ 1-\sqrt{1-1/r^n}\right ]^{2/n}=4^{1/n}e^{-\rho}\ .
\end{equation}
Using this as a coordinate, the geon metric takes the form
\begin{equation}
\label{eq2.24}
\begin{split}
ds^2=&\, x^{-2} \left \{ dx^2+\left ( 1+\frac{x^n}{4} \right )^{4/n} \left [ \left ( \frac{1-x^n/4}{1+x^n/4} \right )^2 d\xi^2+\sum\limits_{i=3}^nd\phi_i^2 \right ] \right \}\ , \\
x\in&\, [0,4^{1/n}]\ , \\
\xi\in&\, [0,4\pi/n]\ , \\
\phi_k \in&\, [0,\alpha_k]\ , \ k\in \{3,\dots,n\}\ .
\end{split}
\end{equation}
By expanding in powers of $x$ and comparing to (\ref{eq2.11}), we can read off that
\begin{equation}
\label{eq2.25}
\begin{split}
\theta=&\, 0\\
\kappa=&\, -(n-1)d\xi^2+\sum\limits_{i=3}^nd\phi_i^2\ , \\
\tr_{\delta} \kappa =&\, -(n-1)+(n-2)=-1\ .
\end{split}
\end{equation}
Then from (\ref{eq2.13}) we get that the Wang mass of a Horowitz-Myers geon is
\begin{equation}
\label{eq2.26}
m=-\frac{4\pi}{n}\sum_{i=3}^n a_i <0\ .
\end{equation}

Finally, we can obtain a Poincar\'e-Einstein metric by appending an extra factor of ${\mathbb R}$ to this manifold, say with coordinate $\tau$, and appending a corresponding factor to the metric (\ref{eq2.24}). The result is
\begin{equation}
\label{eq2.27}
g=\pm N^2 d\tau^2+ds^2\ ,
\end{equation}
where $ds^2$ is the metric (\ref{eq2.21}), or equivalently (\ref{eq2.24}), and the lapse function $N$ is
\begin{equation}
\label{eq2.28}
N=r=\frac{1}{x}\left ( 1+\frac{x^n}{4}\right )^{2/n}\ .
\end{equation}
The Lorentzian version, obtained by choosing the negative sign in (\ref{eq2.27}), is called an \emph{AdS soliton}. One can read off from (\ref{eq2.28}) that $N=\frac{1}{x}\left (1+\frac{1}{2n}x^n+{\mathcal O}(x^{2n}) \right )$, which confirms that the lapse $N$ agrees with Proposition \ref{proposition2.9} since $\mu=-1$.

The Lorentzian-signature metric $-r^2d\tau^2\oplus g$ is called an {\it AdS soliton} \cite{HM}. It is negative Einstein and asymptotically locally anti-de Sitter with boundary-at-infinity given by a flat $(n-2)$-torus (producted with the time direction). AdS solitons admit $\frac{\partial}{\partial \tau}$ as a hypersurface-orthogonal timelike Killing vector field with complete orbits. Hence AdS solitons are globally static. The time-symmetric slices are Horowitz-Myers geons. The lapse function is simply $N=r$.

\subsection{Standard approaches}

Now that we have an explicit description of Horowitz-Myers geons, it becomes easier to see why standard approaches to rigidity theorems will not work, and it allows us to see what new ingredient is needed. We will mostly limit ourselves to discussing rigidity rather than the corresponding positive mass theorems, though a motivating principle for this paper is that the two can be linked, as they so clearly are in the Witten spinor approach.

The negativity of the mass of Horowitz-Myers geons is remarkable. Spinorial techniques are commonly employed both to prove positive mass theorems and to prove the corresponding rigidity of the mass mimimizer, going all the way back to \cite{Witten1}. However, while Horowitz-Myers geons admit spinor structure, they do not admit the right kind. Globally defined spinors on a Horowitz-Myers geon must be anti-periodic under transport along a cycle at infinity tangent to $\frac{\partial}{\partial \xi}$, but the Witten approach to proving positivity of mass, and rigidity, requires there to exist global solutions of the Witten equation which are constant under transport at infinity. Since the torus at infinity is flat, constant spinors do not acquire a phase and cannot be anti-periodic under transport along this cycle.

Now consider the approach used in \cite{ACG} to prove rigidity of the zero-mass case once the positive energy theorem for asymptotically hyperbolic manifolds with spherical conformal infinity was proved. They devised a perturbation of the metric which increased the scalar curvature of any non-Einstein, constant (negative) scalar curvature metric while keeping the mass zero. Then a conformal transformation can be used to return the scalar curvature to $-n(n-1)$ while lowering the mass. This makes an initially zero mass become negative, contradicting the positive energy theorem for asymptotically hyperbolic metrics with round sphere conformal infinity \cite{Wang, CH}. The perturbation constructed in \cite{ACG} was essentially an infinitesimal version of a normalized Ricci flow (this flow was studied by \cite{Bahuaud}). But the flow of \cite{Bahuaud} will change the mass if the mass is not zero before the perturbation \cite{BW}, and will cause a negative mass to increase (to become closer to zero), so this process does not necessarily produce a net decrease in mass. That is why a na\"ive application of the \cite{ACG} approach will not work.

But in the next section we will consider a different perturbation, one which leaves the mass constant and strictly increases when applied to a metric which obeys $R+n(n-1)=0$ but does not obey the static Einstein equations, and vanishes when that metric does obey the static Einstein equations.

\section{Variation of the metric and the scalar curvature}
\setcounter{equation}{0}

\subsection{Corvino's variation for constant scalar curvature metrics}

\noindent Following a well-worn path, the idea of the rigidity proof is to show that if the mass equals the supposed minimum value $m=m_0$ but the scalar curvature obeys $A:=R+n(n-1)\ge 0$ with strict inequality at least somewhere, then there is a conformal variation of the metric which lowers the mass and preserves the inequality $A:=R+n(n-1)\ge 0$ (indeed, the technique yields $A=0$ everywhere). The variation is constructed by solving the Yamabe equation for these asymptotics. This would violate the assertion that $m_0$ is the least mass.

Since the asymptotically hyperbolic structure of the metrics under consideration requires that the sectional curvatures approach $-1$ at conformal infinity, the scalar curvature is constant iff it equals $-n(n-1)$ everywhere. Whenever the scalar curvature is nonconstant, we can proceed directly to the next step, which is to solve the Yamabe problem.

In this subsection, we show that if the scalar curvature is constant but the metric does not satisfy the static Einstein equations, then there is a variation of the metric which produces scalar curvature obeying $A=R+n(n-1)\ge 0$ with strict inequality somewhere.

\begin{lemma}\label{lemma3.1}
Let $(M,g_0)$ be APE, with $g_0\in C^{\infty}(M)$, $A:=R+n(n-1)\equiv 0$, and Wang mass $m(g_0)=m_0$. Furthermore, say that there does not exist any nonconstant function $N$ such that $(N,g_0)$ satisfies equations (\ref{eq2.15}, \ref{eq2.16}) (equivalently, \ref{eq2.17}). Then for any sufficiently small $\epsilon>0$ and any function $0<{\mathcal A}<\epsilon$ on $\Omega$, there is a metric $g=g_0+h$ such that ${\mathcal A}-n(n-1)$ is the scalar curvature of $g$ on $\Omega$. Furthermore $g\in C^{\infty}(M)$, $(M,g)$ is APE, and the Wang mass of $g$ is $m(g)=m_0$.
\end{lemma}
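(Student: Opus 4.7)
The plan is a direct application of Corvino's scalar curvature deformation theorem \cite{Corvino}. Recall that the formal $L^2$-adjoint of the linearized scalar curvature operator $D_{g_0}R$ acts on functions $N$ by
\[
L^*_{g_0} N \;=\; \hess N - g_0\,\Delta N - N\,\ric_{g_0},
\]
and nontrivial elements of $\ker L^*_{g_0}$ are precisely the static potentials solving \eqref{eq2.17}. By the hypothesis of the lemma no nonconstant $N$ lies in this kernel. A nonzero constant $N$ would force $\ric_{g_0}\equiv 0$, which is incompatible with the APE condition $|\ric_{g_0}+(n-1)g_0|\in\mathcal O(x^n)$: near conformal infinity $\ric_{g_0}$ is close to $-(n-1)g_0$, hence nonzero. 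Therefore $\ker L^*_{g_0}=\{0\}$ globally, and in particular on any precompact open $\Omega'\subset M$ that we may wish to work with.

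Next I would fix the precompact domain $\Omega$ on which the scalar curvature is to be prescribed, together with a slightly enlarged precompact domain $\Omega'$ with $\overline{\Omega}\subset\Omega'$ and $\overline{\Omega'}\subset M$. Triviality of $\ker L^*_{g_0}$ on $\overline{\Omega'}$ is exactly the hypothesis under which Corvino's local surjectivity result applies. The argument is to consider the underdetermined fourth-order elliptic operator $D_{g_0}R\circ L^*_{g_0}$ and invert it via an implicit function theorem in weighted H\"older (or Sobolev) spaces of tensors compactly supported in $\Omega'$. The conclusion is that for any smooth function $\mathcal A$ with support in $\overline{\Omega}$ and $C^{k,\alpha}$ norm below a sufficiently small threshold $\epsilon$, there is a smooth symmetric 2-tensor $h$, compactly supported in $\Omega'$, such that
\[
R_{g_0+h} = R_{g_0}+\mathcal A = -n(n-1)+\mathcal A
\]
on all of $M$. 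Once $\epsilon$ is small enough, $g:=g_0+h$ remains a Riemannian metric and inherits the smoothness of $g_0$ and $h$.

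Finally, because $h$ vanishes outside the compact set $\overline{\Omega'}$, the perturbed metric $g$ agrees with $g_0$ on an open neighborhood of conformal infinity. Consequently $g$ inherits the full asymptotic expansion \eqref{eq2.11} of $g_0$: it is APE with the same $h_0$, the same $\theta$, and crucially the same mass aspect $\mu=\tr_{h_0}\kappa$. Definition \ref{definition2.5} then gives $m(g)=m(g_0)=m_0$, and the strict positivity $\mathcal A>0$ on $\Omega$ ensures that the scalar curvature equality $R_g+n(n-1)=\mathcal A>0$ is strict on $\Omega$, as required.

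The only delicate point is verifying the trivial-kernel hypothesis for Corvino's theorem from the data of the lemma; this is the reason for carefully dispatching the ``constant $N$'' case using the APE structure of $g_0$. After that, everything is essentially a black-box appeal to \cite{Corvino}. The conceptual feature that makes the lemma useful in the broader strategy of Theorem \ref{theorem1.2} is precisely that Corvino's deformation is compactly supported, so the mass is preserved exactly — in contrast with conformal or Ricci-flow type deformations, which alter the asymptotics and therefore the mass, and which will be deployed only in the subsequent subsection once the scalar curvature has been nudged strictly above $-n(n-1)$.
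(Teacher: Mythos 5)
Your proposal follows the paper's proof almost verbatim in structure: both reduce the lemma to a black-box application of Corvino's local scalar-curvature prescription theorem (\cite[Theorem 4 and the following Remark]{Corvino}), both identify $\ker L^*_{g_0}$ with the space of static potentials, and both observe that because the resulting perturbation $h$ is compactly supported, the asymptotic expansion --- hence the APE property and the mass aspect $\kappa$ --- is untouched, so $m(g)=m_0$. Your handling of the constant case (a nonzero constant in the kernel would force $\ric_{g_0}=0$, impossible for an APE metric near infinity) is a detail the paper leaves implicit, and is a welcome addition.

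There is one logical slip worth flagging. You write that $\ker L^*_{g_0}=\{0\}$ globally and \emph{``in particular''} on any precompact $\Omega'$. That implication goes the wrong way: a static potential defined only on $\Omega'$ need not extend to all of $M$, so the kernel of $L^*_{g_0}$ on a subdomain can be strictly larger than the restriction of the global kernel, and Corvino's theorem needs triviality of the kernel \emph{on the domain where the deformation is performed}. The paper sidesteps this by reading the hypothesis as ``no nonconstant $N$ with $L^*_{g_0}N=0$ in $\Omega$'' directly. To run your version honestly one should either state the hypothesis locally as the paper does, or add the standard exhaustion argument: the spaces of static potentials on an increasing sequence of precompact domains have nonincreasing finite dimension (a static potential on a connected set is determined by its $1$-jet at a point), so if every precompact domain carried a nonconstant kernel element one could extract a global nonconstant static potential, contradicting the hypothesis; hence some sufficiently large $\Omega$ has trivial kernel, and that is the $\Omega$ on which Corvino's theorem is applied. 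This is a small, fixable gap rather than a wrong approach, and apart from it your argument matches the paper's.
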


\begin{proof} The proof is due to Corvino \cite[Theorem 4, and the Remark that follows it]{Corvino}. Consider the operator $L^*_{g_0}:H_{\rm loc}^2(\Omega)\to {\mathcal L}_{\rm loc}^2(\Omega)$ (see \cite[Section 2.1]{Corvino} for definitions of the function spaces ), for $\Omega$ some compact domain in $M$. Here $g_0$ is any constant scalar curvature metric, so we take $R_{g_0}=-n(n-1)$. The operator is given by the expression
\begin{equation}
\label{eq3.1}
L^*_{g_0}(N):=\hess_{g_0} N -g_0\Delta_{g_0} N -N\ric_{g_0}\ .
\end{equation}
If there is no nonconstant $N$ such that $(N,g_0)$ satisfies (\ref{eq2.17}) in $\Omega$, then the the kernel of this operator is trivial. This operator is the formal $L^2$-adjoint of the linearized scalar curvature map about $g_0$. Corvino shows that, when the kernel is trivial, then for any sufficiently small $\epsilon>0$ and any function ${\mathcal R}$ with ${\mathcal R}$ $\epsilon$-close to $R_{g_0}$ in a suitable norm on $\Omega$, there is a metric $g$ such that ${\mathcal R}$ is the scalar curvature of $g$ on $\Omega$. For our purposes, $R_{g_0}=-n(n-1)$ and we set ${\mathcal A}:={\mathcal R}+n(n-1)$. Moreover, the perturbation $g-g_0=h$ vanishes outside $\Omega$, so $g$ is APE with Wang mass $m_0$
\end{proof}

\subsection{Yamabe variation of non-constant scalar curvature metrics}

\noindent We may now assume that either $g=g_0$ where $L_{g_0}^*$ has nontrivial kernel or we have an APE metric $g$ with nonconstant scalar curvature such that $A:=R+n(n-1)\ge 0$. In the latter case, we now construct a new metric with constant scalar curvature with mass less than that of the original metric. We note that, taking the trace of (\ref{eq2.18}) and using (\ref{eq2.19}), we see that any solution of the static Einstein equations must have constant scalar curvature, so $L_{g}^*$ cannot have nontrivial kernel.

\begin{lemma}\label{lemma3.2}
Let $(M,g)$ be an APE with toroidal infinity, with nonconstant scalar curvature $R_g\ge -n(n-1)$. Then there is an APE metric ${\hat g}$ which is conformal to $g$ and which obeys ${\hat A}:=A_{\hat g}:=R_{\hat g}+n(n-1)\equiv 0$.
\end{lemma}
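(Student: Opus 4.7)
The natural approach is to recast the statement as a prescribed constant scalar curvature (Yamabe) problem. Writing $\hat g = u^{4/(n-2)} g$ for a positive function $u$ with $u \to 1$ at conformal infinity, the condition $R_{\hat g} \equiv -n(n-1)$ becomes the semilinear elliptic PDE
$$\mathcal P(u) := -c_n \Delta_g u + R_g u + n(n-1) u^{(n+2)/(n-2)} = 0, \qquad c_n := \tfrac{4(n-1)}{n-2}.$$
Writing $A := R_g + n(n-1)$, direct substitution yields $\mathcal P(1) = A \ge 0$, so $u_+ \equiv 1$ is immediately a super-solution. The main task is then to produce a positive sub-solution $u_- \le 1$, after which monotone iteration (Sattinger) delivers a solution $u \in [u_-,u_+]$.

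\textit{Linear analysis.} I would next study the linearization $L := c_n(-\Delta_g + n)$ of $\mathcal P$ at $u = 1$. On an asymptotically hyperbolic manifold $L$ is strictly positive (since $-\Delta_g$ has non-negative spectrum and $n > 0$), and in the normal form $g = x^{-2}(dx^2 \oplus h_x)$ the indicial equation $s^2 - (n-1)s - n = 0$ has roots $s = n,\, -1$. Hence $L$ is Fredholm between suitable weighted H\"older/Sobolev spaces and its inverse maps $O(x^{n+1})$ sources isomorphically onto $O(x^n)$ solutions. Since Lemma~\ref{lemma2.3} gives $A \in O(x^{n+1})$, the linear equation $L\phi = A$ admits a unique solution $\phi \in O(x^n)$, and $\phi > 0$ by the maximum principle (using the hypothesis that $A \not\equiv 0$ since $R_g$ is nonconstant).

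\textit{Sub-solution, existence, APE property.} For $\epsilon > 0$ small I would set $u_- := 1 - \epsilon\phi \in (0,1]$ and verify $\mathcal P(u_-) \le 0$ by Taylor expansion of the nonlinearity, invoking the identity $L\phi = A$. Monotone iteration between $u_-$ and $u_+$ then produces $u \in [u_-,u_+]$ solving $\mathcal P(u) = 0$, with $u - 1 \in O(x^n)$ by standard elliptic regularity and the indicial analysis above. Finally, taking $\hat x := u^{-2/(n-2)} x = x + O(x^{n+1})$ as the new special defining function puts $\hat g$ in normal form $\hat x^{-2}(d\hat x^2 \oplus \hat h_{\hat x})$ with $\hat h_{\hat x}$ agreeing with $h_x$ up to order $\hat x^{n-1}$ inclusive and differing from it only in the $\hat x^n$ coefficient (the mass aspect); hence $|E_{\hat g}|_{\hat g} \in O(\hat x^n)$, i.e., $\hat g$ is APE with $R_{\hat g}+n(n-1)\equiv 0$.

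\textit{Main obstacle.} The delicate step is the sub-solution property of $u_-$: a naive first-order expansion using $L\phi=A$ gives $\mathcal P(u_-) = A(1-\epsilon) - \epsilon A\phi + O(\epsilon^2)$, which is not manifestly $\le 0$ near where $A>0$. I would overcome this by either (i) strengthening the linear problem to $L\phi = (1+\delta)A$ with $\delta > 0$ small, gaining strict negativity $\mathcal P(u_-)= -\epsilon\delta A + O(\epsilon^2)$ at first order, or (ii) bypassing sub/super-solutions altogether in favor of a continuity method on $\mathcal P_t(u) := \mathcal P(u) - (1-t)A$, $t \in [0,1]$: at $t = 0$ the equation is trivially solved by $u \equiv 1$, invertibility of $L$ handles openness via the implicit function theorem, and maximum-principle bounds handle closedness.
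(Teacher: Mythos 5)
Your overall architecture --- solve the Yamabe equation with conformal factor tending to $1$, then use the indicial roots $s=-1,\,n$ together with Lemma~\ref{lemma2.3}'s $A\in{\mathcal O}(x^{n+1})$ to conclude the conformal factor is $1+{\mathcal O}(x^n)$, so that only the order-$x^n$ coefficient of $h_x$ changes and $\hat g$ remains APE --- matches the paper's proof in its second half; the paper carries out precisely your ``linear analysis'' by differentiating the nonlinear equation term by term at $x=0$ (equations (\ref{eq3.3})--(\ref{eq3.5}), with the same indicial polynomial $l^2-(n-1)l-n$). The real divergence is in the existence step: the paper constructs no barriers at all, but simply invokes \cite[Theorem A]{AM} (Aviles--McOwen) for the existence of a conformal metric with ${\hat A}\equiv 0$, and devotes its entire proof to the boundary asymptotics.

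The genuine gap is your subsolution. With $L\phi=A$ and $\phi>0$, the expansion you yourself write gives $\mathcal P(1-\epsilon\phi)=(1-\epsilon-\epsilon\phi)A+O(\epsilon^2)$, which for small $\epsilon$ is \emph{nonnegative} to leading order wherever $A>0$: so $1-\epsilon\phi$ is again a supersolution there, not a subsolution. Your fix (i) does not repair this: replacing $L\phi=A$ by $L\phi=(1+\delta)A$ yields $\mathcal P(1-\epsilon\phi)=\bigl(1-\epsilon(1+\delta)-\epsilon\phi\bigr)A+O(\epsilon^2)$, still nonnegative to first order for small $\epsilon$; the claimed value $-\epsilon\delta A$ would require $\epsilon=1$, outside the perturbative regime in which the $O(\epsilon^2)$ remainder is controlled. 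The standard repair is to build the barrier from the \emph{linear} problem rather than from the linearization of $\mathcal P$: let $u_-$ solve $-c_n\Delta u_-+Au_-=0$ with $u_-\to 1$ at infinity. Since $A\ge 0$, the maximum principle gives $0<u_-\le 1$, and then
\begin{equation*}
\mathcal P(u_-)=-Au_-+R_gu_-+n(n-1)u_-^{\frac{n+2}{n-2}}=n(n-1)\Bigl(u_-^{\frac{n+2}{n-2}}-u_-\Bigr)\le 0
\end{equation*}
exactly, with no smallness needed. Your continuity-method alternative (ii) is viable in principle, but the substance is hidden in closedness (a uniform positive lower bound on $u_t$), which you do not supply. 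Either install the barrier above or, as the paper does, cite \cite{AM} for existence; the remainder of your argument then goes through and coincides with the paper's.
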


\begin{proof}
Apply \cite[Theorem A]{AM} to the metric $(M,g)$ from Lemma \ref{lemma3.1}. This shows that there is a metric ${\hat g}$ conformal to $g$ with ${\hat A}=0$.

We next confirm that the metric is APE. Say the conformal factor is ${\hat g}=\varphi^{\frac{4}{(n-2)}}g$. Then $\varphi$ solves the Yamabe equation
\begin{equation}
\label{eq3.2}
Y(\varphi):=-4\left ( \frac{n-1}{n-2}\right )\Delta \varphi + n(n-1)\left ( \varphi^{\frac{(n+2)}{(n-2)}}-\varphi\right ) +A(g)\varphi=0 \ .
\end{equation}
Using equation (2.1), we may expand the Laplacian as
\begin{equation}
\label{eq3.3}
\Delta \varphi = x^2 \varphi'' +\frac{x^2}{\sqrt{h_x}}\frac{\partial \sqrt{h_x}}{\partial x} \varphi'-(n-2)x\varphi'+x^2 \Delta_{h_x}\varphi\ ,
\end{equation}
where $\varphi':=\frac{\partial \varphi}{\partial x}$ and $\Delta_{h_x}$ is the scalar Laplacian defined by the metric $h_x$. We seek a solution with $\varphi(0)=1$. Let $v(x):=\varphi-1$ so that we seek to solve
\begin{equation}
\label{eq3.4}
\begin{split}
0=&\, x^2 v'' +\frac{x^2}{\sqrt{h_x}}\frac{\partial \sqrt{h_x}}{\partial x} v'-(n-2)x\varphi'+x^2 \Delta_{h_x}v\\
&\, -\frac{n(n-2)}{4}(1+v)\left [\left ( (1+v)^{\frac{4}{(n-2)}}-1\right ) +  \frac{A(g)}{n(n-1)} \right ]\ .
\end{split}
\end{equation}

Equation (\ref{eq3.4}) is a singular PDE at conformal infinity $x=0$. We now take $l$ derivatives of (\ref{eq3.4}) using the expansion (\ref{eq3.3}), and evaluate the result at $x=0$, with $\varphi(0)=1$. A calculation yields
\begin{equation}
\label{eq3.5}
\begin{split}
0=&\, -4 \left ( l^2-(n-1)l-n\right )v^{(l)}(0)+\left ( \frac{n-2}{n-1} \right ) A^{(l)}\\
&\,+ F\left ( v'(0), v''(0),\dots,v^{(l-1)}(0)\right ) \ ,\ l=1,2,\dots\ ,
\end{split}
\end{equation}
where $v^{(k)}(0)$ denotes $\frac{\partial^k}{\partial x^k} \big \vert_{x=0} v$. The function $F$ depends on the first $l-1$ $x$-derivatives of $v$ at $x=0$ (and on $A'(0),\dots,A^{(l-1)}(0)$, recalling that $A(0)=0$ for an asymptotically hyperbolic metric). Moreover, $F$ is homogeneous; i.e., $F(0,\dots,0)=0$; and furthermore since $g$ is APE with toroidal infinity, we have $A^{(l)}(0)=0$ if $l\le n$ by Lemma \ref{lemma2.3}. Thus we obtain $v^{(l)}(0)=0$ for $l< n$. For $l=n$, the indicial equation $l^2-(n-1)l-n=0$ has a root, so $v^{(n)}$ is not determined. However, we may now conclude that the expansions for $g$ and ${\hat g}$ in their respective special defining functions cannot differ below order $x^n$, so ${\hat g}$ is APE. \end{proof}

Then we have the following result, which shows that under the conditions laid out in Lemma \ref{lemma3.1}, there is a variation which lowers the mass and preserves the boundedness condition on scalar curvature.

\begin{lemma}\label{lemma3.3}
Let $g$ and ${\hat g}=\varphi^{\frac{4}{(n-2)}}g$ be as in Lemma \ref{lemma3.2}. Then the Wang mass ${\hat m}$ of ${\hat g}$ is less than the Wang mass $m$ of $g$; i.e., ${\hat m}<m$.
\end{lemma}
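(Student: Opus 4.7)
My plan is to express $\hat m - m$ in terms of the leading-order asymptotic coefficient of the Yamabe factor $\varphi$, and then use the maximum principle on the Yamabe equation to control its sign. From the analysis in the proof of Lemma \ref{lemma3.2}, the conformal factor admits the expansion $\varphi = 1 + \alpha(y)\,x^n + O(x^{n+1})$ for some function $\alpha$ on $\partial_\infty M$. To compute the mass of $\hat{g}=\varphi^{4/(n-2)}g$, I would find a special defining function $\hat x$ for $\hat g$. Writing $\hat x = x\,u(x,y)$ with $u(0,y)=1$ and imposing $|d\hat x|_{\hat x^2\hat g}^2=1$ yields $u = 1 + \frac{2\alpha}{n(n-2)}\,x^n + O(x^{n+1})$. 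The cross term $\hat g_{\hat x a}$ is of order $O(x^{n-1})$ and can be killed by a boundary-coordinate change $\hat y^a = y^a + O(x^{n+2})$; for $n\ge 3$ this change does not affect the $\hat x^n$-coefficient of $\hat h_{\hat x}$. Expanding $\hat h_{\hat x} = u^2\,\varphi^{4/(n-2)}\,h_x$ in powers of $\hat x$ and reading off the coefficient of $\hat x^n$ yields
\begin{equation*}
\hat\mu - \mu \;=\; \tfrac{4(n^2-1)}{n-2}\,\alpha, \qquad \hat m - m \;=\; \tfrac{4(n^2-1)}{n-2}\int_{\partial_\infty M}\alpha\,dV(h_{(0)}).
\end{equation*}

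Next I apply the maximum principle to the Yamabe equation (\ref{eq3.2}), rewritten as
\begin{equation*}
\tfrac{4(n-1)}{n-2}\,\Delta_g\varphi \;=\; A(g)\,\varphi + n(n-1)\bigl(\varphi^{(n+2)/(n-2)} - \varphi\bigr).
\end{equation*}
At any interior maximum $p$ of $\varphi$ we have $\Delta_g\varphi(p)\le 0$, while the right-hand side is strictly positive whenever $\varphi(p)>1$ (since $A\ge 0$, $\varphi>0$, and $t^{(n+2)/(n-2)}>t$ for $t>1$). Together with $\varphi\to 1$ at infinity and the appropriate (Neumann-type) boundary condition at any horizon, this forces $\varphi\le 1$ on $M$. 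Since the hypothesis $A\not\equiv 0$ precludes $\varphi\equiv 1$, the strong maximum principle then yields $\varphi<1$ strictly in the interior, and therefore $\alpha(y)\le 0$ for every $y\in\partial_\infty M$.

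The hardest step, which I expect to be the main obstacle, is strengthening $\alpha\le 0$ to the strict integral inequality $\int_{\partial_\infty M}\alpha\,dV(h_{(0)})<0$. My plan is to invoke a Hopf-type boundary lemma at conformal infinity for the linearized operator $\Delta_g - n$. Setting $v:=\varphi-1$, the Yamabe equation reads $(\Delta_g-n)v = A/c_n + O(v^2)$ with $c_n = \tfrac{4(n-1)}{n-2}$ and a nonnegative, nontrivial source. Representing $v$ via the Green's function of $\Delta_g - n$ on the APE background, whose leading boundary asymptotic is $-P(y,z)\,x^n$ for a strictly positive Poisson-type kernel $P$, yields $\alpha(z)<0$ on the open set where the support of $A$ meets the support of $P(\cdot,z)$. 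In the degenerate case where $A$ is flat at $\partial_\infty M$, a bootstrap complements the argument: if $\alpha\equiv 0$ then $v = \beta(y)\,x^{n+1} + O(x^{n+2})$, and the Yamabe equation at order $x^{n+1}$ forces $\beta$ to have the same sign as $A^{(n+1)}(0,y)/c_n$, contradicting $v\le 0$ unless $A^{(n+1)}(0)\equiv 0$, in which case one iterates to higher orders. Either way one obtains $\int_{\partial_\infty M}\alpha\,dV(h_{(0)})<0$ strictly, so $\hat m < m$.
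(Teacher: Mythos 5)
Your overall skeleton matches the paper's: show $\varphi<1$ by the strong maximum principle, expand $\varphi=1+\alpha\,x^n+{\mathcal O}(x^{n+1})$, recompute the special defining function, and read off that the mass changes by a positive multiple of $\int\alpha$. You also correctly identify the crux: upgrading $\alpha\le 0$ to strict negativity. (The precise constant in $\hat\mu-\mu$ is immaterial; only its sign matters.) Where you diverge is in how that crux is resolved, and this is where your argument has a genuine gap. You invoke a Hopf-type boundary lemma at conformal infinity via a Green's function for $\Delta_g-n$ with a ``strictly positive Poisson-type kernel'' whose leading asymptotic is $-P(y,z)x^n$. On a general APE manifold this is exactly the content of the hard step, and you assert it rather than prove it: you would need existence, positivity, and the precise $x^n$ boundary decay of that kernel (this lives in the Mazzeo/Graham--Lee theory of elliptic operators on conformally compact manifolds, and also requires care with the nonlinear term --- one should write $n(n-1)(\varphi^{(n+2)/(n-2)}-\varphi)=c(p)\,v$ with $c>0$ by the mean value theorem so that the sign structure survives). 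Your ``degenerate case where $A$ is flat at $\partial_\infty M$'' discussion is a red herring: by Lemma \ref{lemma2.3} one always has $A\in{\mathcal O}(x^{n+1})$, and a positive-kernel representation, once established, gives strict negativity of $\alpha$ everywhere from $A\not\equiv 0$ anywhere in the interior; no bootstrap in powers of $x$ is needed or relevant.

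The paper avoids all of this machinery with an explicit barrier: $w_{(\alpha)}:=1-\alpha(x^n+nx^{n+1})$ satisfies $Y(w_{(\alpha)})>0$ on a small collar $\{0<x<\epsilon\}$ for every $\alpha\in(0,1]$, i.e.\ it is a subsolution. Since $\varphi<1$ strictly in the interior, $\varphi\le 1-c^2$ on the compact level set $\{x=\epsilon\}$, so one can choose a fixed $\alpha_1>0$ with $w_{(\alpha_1)}\ge\varphi$ there; both functions equal $1$ at $x=0$; and a maximum-principle comparison of $f=\varphi-w_{(\alpha_1)}$ on the collar (using $A\ge0$ and $\varphi=1+{\mathcal O}(x^n)$ to control the sign of the zeroth-order coefficient) forces $f\le 0$. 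Hence $\varphi\le w_{(\alpha_1)}$ near infinity, which gives the quantitative, pointwise bound $\alpha\le-\alpha_1/n<0$ directly, with no Green's function, no Hopf lemma at infinity, and no case analysis. If you want to salvage your route, you should either supply the kernel asymptotics from the conformally compact elliptic theory or, more simply, replace that entire step with a comparison against an explicit subsolution of the Yamabe equation as above.
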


\begin{proof}
The argument is essentially that given in \cite[Lemmata 3.11--3.13]{ACG}. First note that by the strong maximum principle applied to (\ref{eq3.2}), we have $\varphi<1$ on $M$.

Next, define a one-parameter family of functions
\begin{equation}
\label{eq3.6}
w_{(\alpha)}:=1-\alpha \left ( x^n+nx^{n+1}\right )\ .
\end{equation}
Then by explicit calculation we have
\begin{equation}
\label{eq3.7}
\Delta w_{(\alpha)}=-\alpha \left (nx^n+2n(n+1)x^{n+1}+{\mathcal O}(x^{n+2})\right )\ .
\end{equation}
Hence
\begin{equation}
\label{eq3.8}
\begin{split}
Y(w_{(\alpha)})=&\, 4\alpha n\left ( \frac{n-1}{n-2}\right )x^n+8\alpha n\frac{(n+1)(n-1)}{(n-2)}x^{n+1}-4\alpha n\left ( \frac{n-1}{n-2}\right )x^n\\
&\, -4\alpha n^2 \left ( \frac{n-1}{n-2}\right )x^{n+1}+Aw_{(\alpha)}+\alpha {\mathcal O}(x^{n+2})\\
=&\, 4\alpha n\frac{(n-1)(n+2)}{(n-2)}x^{n+1}+Aw_{(\alpha)}+\alpha {\mathcal O}(x^{n+2})\\
>&\, 0
\end{split}
\end{equation}
for small $x>0$ and $\alpha\in (0,1]$ (using that $A\ge 0$). Thus, for any $\alpha\in (0,1]$, $w_{(\alpha)}$ is a subsolution for $Y(\varphi)=0$, $\varphi(0)=1$.

Now choose a compact level set $x=\epsilon$ near infinity, where $\epsilon>0$ is small. Denote the level set by ${\mathcal S}$. Since $\varphi<1$ on $M$, then $\varphi\le 1-c^2 <0$ on ${\mathcal S}$, for some constant $c>0$. Choose a fixed $\alpha=\alpha_1>0$ such that $1-c^2\le w_{(\alpha_1)}<1$ on ${\mathcal S}$. Now consider the function $f(x,y^A):=\varphi(x,y^A)-w_{(\alpha_1)}(x)$. If it has a positive maximum for $x\in [0,\epsilon]$, then this maximum must occur for some $x\in (0,\epsilon)$, since $f(0,y^A)=0$ and $f(\epsilon,y^A)\le 0$. But by taking the difference of equations (\ref{eq3.2}) and (\ref{eq3.8}) we have for $x\in (0,\epsilon)$ that
\begin{equation}
\label{eq3.9}
\begin{split}
&\,-4 \left ( \frac{n-1}{n-2}\right ) \Delta f + n(n-1) \left [ \left ( \varphi^{\frac{n+2}{n-2}}-\varphi \right ) - \left ( w_{(\alpha_1)}^{\frac{n+2}{n-2}}-w_{(\alpha_1)}\right ) \right ] +Af\\
&\, \le -4\alpha_1 n \frac{(n-1)(n+2)}{(n-2)}x^{n+1}+\alpha_1 {\mathcal O}(x^{n+2})\\
&\, <0\ .
\end{split}
\end{equation}
A routine calculation, using that $\frac{\varphi-w_{(\alpha_1)}}{\varphi}\in {\mathcal O}(x^n)$, simplifies the term in square brackets as follows.
\begin{equation}
\label{eq3.10}
\begin{split}
&\, \left ( \varphi^{\frac{n+2}{n-2}}-\varphi \right ) - \left ( w_{(\alpha_1)}^{\frac{n+2}{n-2}}-w_{(\alpha_1)}\right ) \\
=&\, \varphi^{\frac{n+2}{n-2}}\left [ 1 - \left ( 1- \frac{(\varphi-w_{(\alpha_1)})}{\varphi}\right )^{\frac{n+2}{n-2}} \right ]-\left ( \varphi - w_{(\alpha_1)} \right )\\
=&\, \varphi^{\frac{n+2}{n-2}}\left [ 1 - \left ( 1- \left ( \frac{n+2}{n-2}\right ) \frac{(\varphi-w_{(\alpha_1)})}{\varphi}\right ) \right ] -\left ( \varphi - w_{(\alpha_1)} \right )+{\mathcal O}(x^{2n})\\
=&\, \left [ \left ( \frac{n+2}{n-2}\right ) \varphi^{\frac{4}{(n-2)}}-1\right ] \left ( \varphi - w_{(\alpha_1)} \right ) +{\mathcal O}(x^{2n})\\
=&\, \left [ \left ( \frac{n+2}{n-2}\right ) \varphi^{\frac{4}{(n-2)}}-1\right ] f +{\mathcal O}(x^{2n})\ .
\end{split}
\end{equation}
Thus, inequality (\ref{eq3.9}) becomes
\begin{equation}
\label{eq3.11}
-4 \left ( \frac{n-1}{n-2}\right ) \Delta f +\left [ n(n-1) \left ( \left ( \frac{n+2}{n-2}\right ) \varphi^{\frac{4}{(n-2)}}-1\right ) +A\right ] f<0\ .
\end{equation}
By the maximum principle, $f$ can have a positive interior maximum only if
\begin{equation}
\label{eq3.12}
n(n-1) \left ( \left ( \frac{n+2}{n-2}\right ) \varphi^{\frac{4}{(n-2)}}-1\right ) +A < 0
\end{equation}
at the maximum. But since $A\ge 0$ and $\varphi=1+{\mathcal O}(x^n)$ with $x\in (0,\epsilon)$ with $\epsilon$ small, this coefficient is in fact positive for all $x\in (0,\epsilon)$. Thus $f\le 0$ for all $x\in (0,\epsilon)$. Hence
\begin{equation}
\label{eq3.13}
\varphi\le w_{(\alpha_1)}=1-\alpha_1\left ( x^n+nx^{n-1}\right )
\end{equation}
or all $x\in (0,\epsilon)$. This implies that
\begin{equation}
\label{eq3.14}
\varphi=1-\frac{1}{n}a_{(n)}x^n+{\mathcal O}(x^{n+1})\ , \ a_{(n)}\ge \alpha_1>0\ .
\end{equation}

Finally, we now have
\begin{equation}
\label{eq3.15}
\begin{split}
{\hat g}=&\, \varphi^{\frac{4}{(n-2)}}g\\
=&\, \left ( 1-\frac{4}{n(n-2)}a_{(n)}x^n\right )\frac{1}{x^2}\left [ dx^2\oplus \left (\delta +\frac{1}{(n-1)}x^{n-1}\theta+\frac{1}{n}\kappa x^n\ +{\mathcal O}(x^{n+1})\right ) \right ]\ .
\end{split}
\end{equation}
We have to recompute the special defining function. It suffices to do this to order $x^n$ inclusive. That is, we write
\begin{equation}
\label{eq3.16}
\begin{split}
\frac{d{\hat x}}{{\hat x}}=&\, \left ( 1-\frac{4}{n(n-2)}a_{(n)}x^n\right )^{1/2}\frac{dx}{x}\\
=&\, \left ( 1-\frac{2}{n(n-2)}a_{(n)}x^n+{\mathcal O}(x^{n+1})\right )\frac{dx}{x}\ ,
\end{split}
\end{equation}
which integrates to yield
\begin{equation}
\label{eq3.17}
{\hat x}=x-\frac{2a_{(n)}}{n^2(n-2)}x^{n+1}+{\mathcal O}(x^{n+2})\ .
\end{equation}
This in turn gives
\begin{equation}
\label{eq3.18}
x={\hat x}+\frac{2a_{(n)}}{n^2(n-2)}{\hat x}^{n+1}+{\mathcal O}({\hat x}^{n+2})\ .
\end{equation}
Using (\ref{eq3.18}) and the first line of (\ref{eq3.16}) in (\ref{eq3.15}), we have
\begin{equation}
\begin{split}
\label{eq3.19}
{\hat g}=&\, \frac{d{\hat x}^2}{{\hat x}^2}\oplus \left [ \left ( 1-\frac{8a_{(n)}}{n(n-2)}x^n \right ) \delta +\frac{1}{(n-1)}{\hat x}^{n-1}\theta +\frac{1}{n}\kappa {\hat x}^n\ +{\mathcal O}({\hat x}^{n+1}) \right ]\\
=&\, \frac{d{\hat x}^2}{{\hat x}^2}\oplus \left [ \delta +\frac{1}{(n-1)}{\hat x}^{n-1}\theta +\frac{1}{n}{\hat \kappa} {\hat x}^n\ +{\mathcal O}({\hat x}^{n+1}) \right ]\\
{\hat \kappa}:=&\, \kappa - \frac{8a_{(n)}}{n(n-2)} \delta\ .
\end{split}
\end{equation}
Then $\tr_{\delta}{\hat \kappa}=\tr_{\delta}\kappa-\frac{8(n-1)}{n(n-2)}a_n<\tr_{\delta}\kappa$ and by (\ref{eq2.13}) we have ${\hat m}<m$.
\end{proof}

\subsection{Proof of Theorem \ref{theorem1.2}}

\begin{proof}
Say that $g_0$ is APE with mass $m_0$, with $A_{g_0}\ge 0$, and such that the kernel of $L_g^*$ is trivial. If $A_{g_0}$ is somewhere nonzero, set $g:=g_0$; otherwise invoke Lemma \ref{lemma3.1} to produce a metric $g$ with $A_g$ somewhere nonzero and everywhere nonnegative, and with mass $m=m_0$. In either case, now apply Lemma \ref{lemma3.2} to obtain from $g$ a new APE metric ${\hat g}$ with mass ${\hat m}<m=m_0$ and ${\hat A}:=A_{\hat g}=0$.

Since we assume that $m_0$ is the mass of the minimizing geon in Conjecture \ref{conjecture1.1}, if that conjecture is true then the above construction must be impossible. Therefore, $L_{g_0}^*$ must have nontrivial kernel. Then there must be a nonconstant function $N$ lying in that kernel. But then $(M,g,N)$ is a solution of the static Einstein equations (\ref{eq2.17}), or equivalently the system (\ref{eq2.15}, \ref{eq2.16}). This proves Theorem \ref{theorem1.2}.(i).

Write equation (\ref{eq2.17}) in the form
\begin{equation}
\label{eq3.20}
\begin{split}
P(N)=&\, 0\ , \\
P(\cdot ):=&\, \hess - g\Delta-\ric\ .
\end{split}
\end{equation}
For some $v$ bounded and sufficiently differentiable, a short calculation yields
\begin{equation}
\label{eq3.21}
x^{-s}P(x^sv)\big \vert_{x=0}=(s+1)\left [ (n-1)dx^2\oplus (n-1-s)h_0\right ]\ .
\end{equation}
Setting the trace of equation (\ref{eq3.20}) equal to zero, we obtain the indicial equation for (\ref{eq2.16}), which is $0=(s+1)(n-s)$, so the indices are $s=-1$ and $s=n$. Then $N$ can diverge as $\frac{1}{x}$ or even $\frac{\log x}{x}$ as $x\to\infty$. The coefficients of the divergent terms are arbitrary functions on conformal infinity: they are ``boundary data''. The assumption that $xN\to 1$ fixes this freedom. (This, together with the static Einstein equations and the APE condition for $g$, are a form of asymptotically anti-de Sitter condition for the spacetime.) Now from Proposition \ref{proposition2.9} with $\lambda=0$ (see also \cite[Section 3.3]{GW}), we obtain that $N$ and $h_0$ have expansions given by \eqref{eq2.20}, and in particular $x^2g$ (for $g$ as in Proposition \ref{proposition2.9}) is a smooth conformal compactification at $x=0$.

There remain two possibilities: either $N$ has zeroes (contained within a bounded region) or it does not.
If $N$ has zeroes contained within a bounded region, it is well-known that the zero set must be a totally geodesic, closed, embedded hypersurface (e.g., \cite[Proposition 2.6]{Corvino}).

Set $n=3$. If the zero set of $N$ is a boundary, or is 2-sided so that cutting along it produces a boundary, then by \cite[Theorem 4]{ACD}\footnote
{This relies on $C^2$ smoothness of the conformal compactification $x^2g$; we in fact have $C^{\infty}$.}
$(M,g,N)$ must be the exterior of a Lemos toroidal black hole \cite{Lemos}. In the non-2-sided case, one achieves this by cutting out the offending component(s) and replacing it (them) with the metric completion. The Lemos toroidal black holes have mass $m>0$ whereas we presume that $m=m_0<0$ since $m_0$ is the mass of the minimizing Horowitz-Myers geon, so the possibility that $N$ has zeroes is excluded.

On the other hand, if $N$ has no zeroes, we can invoke \cite[Theorem 4]{ACD}, which is the adaptation of the underlying result of \cite{Anderson} to the spacetime context. It states that if a static Einstein 4-dimensional spacetime with toroidal conformal infinity obeys $A\equiv R_g+n(n-1)=0$ and if the zero set of $N$ is empty, then the spacetime must be an AdS soliton constructed from $(M,g_0,N)$, where $g_0$ is the corresponding Horowitz-Myers geon metric on a time-symmetric slice $M\simeq {\mathbb R}^2\times S^1$ and $N$ is the corresponding lapse. While there is a countable infinity of Horowitz-Myers geons, the geon of least mass is unique up to isometry (and relabelling of cycles, when there is more than one distinct shortest cycle). This proves conclusion (ii) of the theorem.
\end{proof}

\section{Conclusions}
\setcounter{equation}{0}

\noindent We are unable to obtain the same rigidity for all $n$ that we obtain for $n=3$. The difficulty is the absence of uniqueness theorems for static Einstein metrics in higher dimensions with toroidal conformal infinity. Nonetheless, some limited results are available \cite{GSW1, GSW2, GSW3}. We discuss these results in this section. More important than the results are perhaps the gaps that they leave, since gaps indicate possible mechanisms for the failure of the full Conjecture \ref{conjecture1.1}. One gap concerns black holes. If either the Lemos black holes \cite{Lemos} were known to be the unique static APE black holes with toroidal conformal infinity or simply if all static APE black holes with toroidal conformal infinity were known to have positive mass, then this gap could be closed. However, at present, we must entertain the possibility that neither of these statements is true. The other gap is that, among complete static APEs without horizons, one can show the uniqueness of Horowitz-Myers geons \cite{GSW1, GSW2}, but this requires an additional asymptotic assumption which may be unreasonably strong and may preclude other types of ``geon''.

In this section, we assume that $(M,g,N)$ solves the static Einstein system (\ref{eq2.17}) (equivalently, (\ref{eq2.15}, \ref{eq2.16})), and note that then we always have $R+n(n-1)=0$. We also assume that $(M,g)$ is APE and has flat toroidal conformal infinity and zero Neumann data. From Proposition \ref{proposition2.9} with $\lambda=0$ we have
\begin{equation}
\label{eq4.1}
N(x)=\frac{1}{x}\left (1-\frac{\mu}{2n}x^n\right )+{\mathcal O}(x^n)\ ,\ \mu:=\tr_{\delta}\kappa\ .
\end{equation}

Throughout, we have been using the Fefferman-Graham compactification \cite{FG1, FG2} based on a special defining function yielding Gaussian normal coordinates. In references \cite{GSW1, GSW2, GSW3}, a different conformal compactification is used, one used by Chru\'sciel and Simon \cite{CS} and based on the reciprocal of the lapse. The reciprocal of the lapse differs from a special defining function, as is evident from (\ref{eq4.1}). We begin with a brief comparison of the two compactifications, namely the Fefferman-Graham Gaussian-normal-coordinate compactification which in the present setting is
\begin{equation}
\label{4.2}
{\hat g}:=x^2g = dx^2+\delta +\frac{1}{n}\kappa x^n+{\mathcal O}(x^{n+1})
\end{equation}
and which we have been using all along, and the Chru\'sciel-Simon lapse-based \emph{Fermat metric} compactification
\begin{equation}
\label{eq4.3}
\begin{split}
{\tilde g}:=&\, \frac{1}{x^2N^2} \left ( dx^2+\delta +\frac{1}{n}\kappa x^n+{\mathcal O}(x^{n+1})\right ) \\
=&\, \frac{1}{\left (1-\frac{\mu}{2n} x^n\right )^2} \left ( dx^2+\delta +\frac{1}{n}\kappa x^n+{\mathcal O}(x^{n+1})\right )
\end{split}
\end{equation}
used in \cite{GSW1}, \cite{GSW2}, and \cite{GSW3}, where in the last line we used (\ref{eq4.1}). The main point is that the second fundamental form ${\tilde K}_x$ of constant-lapse (and thus constant-$x$) hypersurfaces computed using ${\tilde g}$ as the ambient metric is given by
\begin{equation}
\label{eq4.4}
{\tilde K}_x=-\left ( \kappa + \mu\delta \right ) x^{n-1}+{\mathcal O}(x^n)\ .
\end{equation}
Note as a check that this gives the mean curvature of these hypersurfaces to be
\begin{equation}
\label{eq4.5}
{\tilde H}_x=-n\mu x^{n-1} +{\mathcal O}(x^n)\ ,
\end{equation}
which agrees with \cite[equation (4.2)]{GSW3}.
Finally, if we compute the second fundamental form ${\hat K}_x$ of the same hypersurfaces using ${\hat g}$ as the ambient metric then we have that
\begin{equation}
\label{eq4.6}
{\hat K}_x= -\kappa x^{n-1}+{\mathcal O}(x^n)\ ,
\end{equation}
so
\begin{equation}
\label{eq4.7}
{\tilde K}_x={\hat K}_x-x^{n-1}\mu \delta +{\mathcal O}(x^n)\ .
\end{equation}

The Fermat compactification obeys a structure theorem which yields some limited uniqueness results.

\begin{theorem}[{\cite[Theorem III.2.1 and Remark III.2.2]{GSW2}}]\label{theorem4.1}
If $(M,g,N)$ is a solution of the static vacuum equations with APE asymptotics, flat toroidal conformal boundary, and negative mass, and if the eigenvalues of ${\tilde K}$ are (positive) semi-definite, then the universal cover of the conformally compactified manifold $({\tilde M},{\tilde g})$, ${\tilde g}:=g/N^2$, splits isometrically as $({\mathbb R}^k\times W, \delta \oplus \sigma )$, where $({\mathbb R}^k,\delta ) \equiv {\mathbb E}^k$ is Euclidean $k$-space and $(W,\sigma)$ is a compact Riemannian manifold with non-empty boundary.
\end{theorem}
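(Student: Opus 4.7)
The plan is to work in the Fermat-compactified picture $(\tilde M,\tilde g)$, where $\tilde g$ extends smoothly across the flat toroidal boundary $\{x=0\}$, translate the static Einstein system into equations for $\tilde g$, and exploit the semi-definiteness of $\tilde K$ to produce a parallel distribution whose integration on the universal cover delivers the $\mathbb{R}^k$ factor. The first step is to compute $\ric_{\tilde g}$ via the conformal change formula, simplifying with the static equations $\hess_g N = N(\ric_g + ng)$ and $\Delta_g N = nN$ from (\ref{eq2.15}, \ref{eq2.16}). Differentiating along the flow of $\tilde \nabla x$ should then yield a Riccati-type evolution equation for $\tilde K$ along the foliation $\{x=\mathrm{const}\}$, in the spirit of the calculations of Chru\'sciel and Simon \cite{CS}.

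The second step is to combine this Riccati equation with the hypothesis $\tilde K\geq 0$ to force at least one eigenvalue of $\tilde K$ to vanish identically. The asymptotic expression (\ref{eq4.4}), $\tilde K \sim -(\kappa+\mu\delta)x^{n-1}$, together with the negativity of the Wang mass $m=\int_{\partial_\infty M}\mu\,dV$, constrains the leading behaviour of the eigenvalues near $\{x=0\}$. A strong maximum principle applied eigenvalue-by-eigenvalue along the Riccati propagation should then promote the degeneration at infinity to a persistent zero-eigenspace throughout the bulk; by the equations derived in the first step, the corresponding kernel distribution is parallel in $\tilde g$.

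The third step is to invoke the de Rham decomposition theorem on the universal cover, which then yields the splitting $\mathbb{R}^k\times W$. The Euclidean factor carries the flat metric because the zero-eigenspaces of $\tilde K$ are tangent to totally geodesic submanifolds; $W$ is compact because $\tilde M$ is compact and the deck transformation group acts cocompactly; and $\partial W$ is nonempty because the entire boundary $\{x=0\}$ of $\tilde M$ lifts into $W$ (the Euclidean factor being boundaryless).

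The principal obstacle is the second step: converting the pointwise inequality $\tilde K\geq 0$, together with the sign information carried by the mass aspect, into an honest rigidity statement producing a globally parallel kernel. One must rule out the scenario in which $\tilde K$ remains strictly positive throughout the bulk despite degenerating at the boundary, and this is where the specific sign of $\mu$ coming from the negative-mass hypothesis must enter in an essential way; establishing this will likely require both the strong maximum principle and careful control of the Riccati propagation near the flat conformal boundary, including an argument that the boundary behaviour of the smallest eigenvalue of $\tilde K$ cannot be reconciled with strict interior positivity once the static equations are imposed.
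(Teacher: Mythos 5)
First, a point of order: the paper itself gives no proof of this statement --- it is imported verbatim from \cite[Theorem III.2.1 and Remark III.2.2]{GSW2} --- so your proposal can only be measured against the argument in that reference. There, the analytic heart is the identity, obtained exactly as in your first step from the conformal transformation law together with (\ref{eq2.15}) and (\ref{eq2.16}), that $\ric_{\tilde g}=-(n-1)N\,\tilde\nabla^2 V$ for $V:=1/N$ the defining function of the Fermat compactification. Since $\tilde K$ is, up to normalization, the tangential part of $\tilde\nabla^2 V$ on the level sets of $V$, the hypothesis $\tilde K\ge 0$, combined with the trace of the static system (which controls the normal--normal component) and the sign information carried by \eqref{eq4.4} and \eqref{eq4.5}, delivers $\ric_{\tilde g}\ge 0$ on the compact manifold-with-boundary $(\tilde M,\tilde g)$, whose boundary is a totally geodesic flat torus (by \eqref{eq4.4}, $\tilde K_x\to 0$ as $x\to 0$). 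The splitting then follows from a Cheeger--Gromoll-type structure theorem for compact manifolds with nonnegative Ricci curvature and convex boundary: the ${\mathbb R}^k$ factor is split off by \emph{lines} in the universal cover, which exist because the toroidal boundary has infinite fundamental group and its closed geodesics lift to lines. The flatness of the ${\mathbb R}^k$ factor and the compactness of $W$ are conclusions of that theorem, not separate observations.

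Your second step is the genuine gap, and you have correctly located it yourself. Nothing in the hypotheses forces $\tilde K$ to acquire a kernel in the interior: $\tilde K\ge 0$ is perfectly consistent with strict positivity away from $x=0$, and the order-$x^{n-1}$ degeneration of $\tilde K$ at the boundary occurs for \emph{every} metric in this class, split or not, so it cannot be the seed of a parallel distribution. An ``eigenvalue-by-eigenvalue strong maximum principle'' along the Riccati propagation is not a well-defined tool (eigenvalues are merely Lipschitz where they cross, and the Riccati ODE along the normal flow of the level sets of $V$ breaks down at focal points and at critical points of $V$, which cannot be excluded a priori); and even granting a pointwise kernel of $\tilde K$, you would still need it to be a smooth distribution invariant under parallel transport around arbitrary loops before de Rham applies --- convexity of level sets yields no such holonomy reduction. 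Likewise, cocompactness of the deck group does not by itself make $W$ compact. The parallel fields must instead be manufactured from lines and Busemann functions, which is where the boundary topology --- absent from your argument --- enters in an essential way.
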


A weakness of this theorem is that Horowitz-Myers geons have $\kappa^i{}_j=\diag (-(n-1),1,\dots,1)$, so $\kappa + \mu \delta =\diag (-n,0,\dots,0)$. Then from (\ref{eq4.4}), one does not know whether ${\tilde K}$ for these metrics is positive definite from $\kappa$ alone. With an explicit metric such as the geon, one can just check directly, but in general one must compute the error terms in (\ref{eq4.4}). Thus, one cannot easily pass from Theorem \ref{theorem4.1} to a uniqueness theorem for Horowitz-Myers geons phrased in terms of the Fefferman-Graham compactification. There is such a theorem phrased in terms of the Fermat metric compactification, meaning that it uses the semi-definiteness of ${\tilde K}$. This is \cite[Theorem IV.1]{GSW2}, which requires in addition to the assumptions listed in Theorem \ref{theorem4.1} a topological assumption on bulk and boundary fundamental groups which implies that $\pi_1(M)\simeq {\mathbb Z}^{n-2}$. It also requires that $\mu$ be pointwise negative.

This leaves open the possibility that $(M,g,N)$ may describe a black hole. This is of course not the only open possibility, given in particular the assumption on ${\tilde K}$ in Theorem \ref{theorem4.1}, but it is potentially the most relevant open issue. But \cite[Proposition 1.1]{GSW3} specializes to static black holes and says that, under the same conditions as above except with the assumption that restricts $\pi_1(M)$ is weakened to the condition that $|\pi_1(M)|=\infty$, there are no non-degenerate black holes (that is, no black holes except possibly those whose horizons are double roots of $N$).

These results point to the most likely source of non-uniqueness for static metrics with flat toroidal conformal infinity. Both results apply only when $\pi_1(M)$ has infinite order. In particular, this condition can be violated by static configurations of topologically spherical, totally geodesic boundary components for $(M,g)$; that is, multiple, topologically spherical black holes in static equilibrium. It can be expected that the areas of these horizons would contribute positively to the mass, but their mutual gravitational binding energies would contribute negatively. For suitably chosen configurations (perhaps many small black holes of this sort), the binding energies may exceed the contributions from horizon area, possibly by enough to lead to a violation of the full Horowitz-Myers conjecture if a toroidal horizon does not form before this point is reached. It is interesting that static configurations of this nature cannot exist when the dimension of space is $n=3$, but are not (yet) ruled out in higher dimensions.

\appendix
\section{Proofs of propositions in subsections 2.2 and 2.3}
\setcounter{equation}{0}

\begin{proof}[Proof of Proposition \ref{proposition2.6}]
It is a standard result (e.g., \cite[Proposition 2.3.4]{DGH} that partial evenness is well-defined; i.e., that ${\hat h}_{\hat x}$ is partially even because $h_x$ is. We will repeat part of that proof in order to prove the invariance for $\mu$ for $n$ odd. To begin, we define $\omega(x,\cdot)$ by ${\hat x}=e^{\omega}x$. Since $x$ and ${\hat x}$ are both special defining functions, we have $x^{-2}g^{-1}(dx,dx)={\hat x}^{-2} g^{-1} (d{\hat x}, d{\hat x})=1$ on a neighbourhood of conformal infinity. This yields
\begin{equation}
\label{eqA.1}
2\omega'+x\left ( \omega'^2+h_x^{-1} (d\omega, d\omega ) \right )=0\ .
\end{equation}
Evaluating this at $x=0$, one observes that $\omega'(0)=0$, while differentiating $2k$ times with respect to $x$ and evaluating the result at $x=0$ we get
\begin{equation}
\label{eqA.2}
2\omega^{(2k+1)}=-2k\frac{\partial^{2k-1}}{\partial x^{2k-1}}\bigg \vert_{x=0} \left [ \omega'^2 +h_x^{-1} (d\omega,d\omega)\right ] \ .
\end{equation}
Every term on the right-hand side has either an odd number of derivatives of $\omega$ or an odd number of derivatives of $h^{-1}$, and in either case this odd number is $\le 2k-1$. Since $h_x$ is partially even, the odd derivatives of $h_x^{-1}$ of order $\le 2k-1$ vanish at $x=0$ for $2k-1<n$ if $n$ is odd, and for $2k-1<n-1$ if $n$ is even. Put differently, the odd derivatives of $h_x^{-1}$ of order $<n-1$ vanish at $x=0$, whether $n$ is even or odd. Then by (\ref{eqA.2}) we have that $\omega^{(2k+1)}(0)=0$ whenever $2k+1<n+1$, whether $n$ is even or odd.

Now take $n$ to be odd. Then the first possibly-nonzero odd term in $\omega$ is of order $x^{n+2}$. Since ${\hat x}=e^{\omega}x$, then no \emph{even} term can occur in the expansion of ${\hat x}$ in powers of $x$ below order $x^{n+3}$. It follows that if $x$ is expanded in powers of ${\hat x}$ then the first possibly-nonzero even term is of order ${\hat x}^{n+3}$, and so if $\omega$ is expanded in powers of ${\hat x}$ then the first possibly-nonzero odd term is of order ${\hat x}^{n+2}$.
\begin{equation}
\label{eqA.3}
\begin{split}
\omega =&\, \omega_0 + \text{even\ terms\ }+{\mathcal O}({\hat x}^{n+2})\text{\ for\ }n\text{\ odd}\ , \\
\Rightarrow d\omega =&\, \left ( \text{odd\ terms\ }+{\mathcal O}({\hat x}^{n+1})\right )d{\hat x}
+\left ( \text{even\ terms\ }+{\mathcal O}({\hat x}^{n+2}) \right ) dy\ , \\
\end{split}
\end{equation}
where $dy$ represents differentials of coordinates $y$ on $\partial_{\infty} M$. A brief calculation using
\begin{equation}
\label{eqA.4}
\begin{split}
x=&\, e^{-\omega}{\hat x}\\
dx=&\, e^{-\omega}\left ( d{\hat x} -{\hat x}d\omega \right )
\end{split}
\end{equation}
and
\begin{equation}
\label{eqA.5}
\begin{split}
{\hat h}_{\hat x} =&\, e^{2\omega}\left ( dx^2+h_x\right ) -d{\hat x}^2\\
=&\, e^{2\omega}h_x + {\hat x}^2 d\omega^2 -2{\hat x}d{\hat x}d\omega
\end{split}
\end{equation}
leads to
\begin{equation}
\label{eqA.6}
\begin{split}
{\hat h}_{\hat x} = &\, e^{2\omega}h_x +\text{even\ terms\ } +{\mathcal O}({\hat x}^{n+4}) \\
 = &\, e^{2\omega_0} h_x + \text{even\ terms\ } +{\mathcal O}({\hat x}^{n+2})\ ,
\end{split}
\end{equation}
where ${\mathcal O}({\hat x}^p)$ means that the coefficient of $dy^i dy^j$ is homogeneous in ${\hat x}$ of order $p$. Finally, we have to replace $x$ by ${\hat x}$ in $h_x$, but this cannot produce on odd term below order ${\hat x}^{n+2}$.
\begin{equation}
\label{eqA.7}
{\hat h}_{\hat x} = e^{2\omega_0}\left ( h_{(0)}+x^2h_{(2)}+\dots+x^{n-1}h_{(n-1)} +\frac{x^n}{n}\kappa\right ) + \text{even\ terms\ } +{\mathcal O}({\hat x}^{n+2})\ .
\end{equation}
We can re-express each power of $x$ using $x=e^{-\omega}{\hat x}=e^{-\omega_0} x\left ( 1 + \text{even\ terms\ }\right ) +{\mathcal O}({\hat x}^{n+3})$. We get
\begin{equation}
\label{eqA.8}
\begin{split}
{\hat h}_{\hat x} = &\, e^{2\omega_0} h_{(0)}+{\hat x}^2 {\check h}_{(2)}+\dots+e^{-(n-3)\omega_0}{\hat x}^{n-1}{\check h}_{(n-1)}+e^{-(n-2)\omega_0}\frac{{\hat x}^n}{n}\kappa\\
&\, + \text{even\ terms\ } +{\mathcal O}({\hat x}^{n+2})\ ,
\end{split}
\end{equation}
where in general ${\check h}_{(2k)}$ differs from $h_{(2k)}$, for $k=2,4,\dots$, but ${\check h}_{(2)}=h_{(2)}$ and, more importantly here, the order $x^n$ term is unchanged. If we write the expansion of ${\hat h}_{\hat x}$ by
\begin{equation}
\label{eqA.9}
{\hat h}_{\hat x} = {\hat h}_{(0)}+{\hat x}^2{\hat h}_{(2)}+\dots+\frac{{\hat x}^{n-1}}{(n-1)}{\hat \theta}+\frac{{\hat x}^n}{n}{\hat \kappa}+{\mathcal O}({\hat x}^{n+1})\ ,
\end{equation}
then we read off that
\begin{equation}
\label{eqA.10}
{\hat \mu}:=\tr_{{\hat h}_{(0)}}{\hat \kappa}=e^{-n\omega_0}\tr_{h_{(0)}}\kappa=:e^{-n\omega_0}\mu\ .
\end{equation}
\end{proof}

\begin{proof}[Proof of Proposition \ref{proposition2.7} and Remark \ref{remark2.8}]
If we multiply equation (\ref{eq2.5}) by $-2x$ we obtain
\begin{equation}
\begin{split}
\label{eqA.11}
-2xE^{\perp}=&\, xh_x''-(n-2)h_x'-h_{x} \tr_{h_x}h_x'-x h_x'\cdot h_x^{-1}\cdot h_x'\\
&\, +\frac12xh_x'\tr_{h_x}h_x'-2x\ric(h_x)\ .
\end{split}
\end{equation}
If we differentiate this expression $n-1$ times with respect to $x$ and set $x=0$, we get
\begin{equation}
\begin{split}
\label{eqA.12}
-2(n-1)\frac{\partial^{n-2}}{\partial x^{n-2}}\bigg \vert_{x=0} E^{\perp}=&\, h^{(n)}(0) - \frac{\partial^{n-1}}{\partial x^{n-1}}\bigg \vert_{x=0} \left [ h(x) \tr_{h(x)}h'(x)\right ]\\
&\, +\frac12 (n-1)\frac{\partial^{n-2}}{\partial x^{n-2}}\bigg \vert_{x=0} \bigg [ h'(x) \tr_{h(x)}h'(x)\\ &\, -2h_x'\cdot h_x^{-1}\cdot h_x'-4\ric(h(x))\bigg ]\ .
\end{split}
\end{equation}
Since $E\in {\mathcal O}(x^{n+1})$, the components of $E$ in our basis are ${\mathcal O}(x^{n-1})$ and therefore the left-hand side of (\ref{eqA.12}) vanishes. On the right-hand side, we first note that $h^{(n)}(0)=(n-1)!\kappa$. For any partially even metric (\ref{eq2.12})), we have $h'(0)=0$. Therefore, the only terms on the right-hand side that are not zero are those that do not have a factor $h'(0)$. Such terms either contain $\kappa$ or contain only $h(0),\dots,h^{(n-2)}(0)$. This observation yields
\begin{equation}
\label{eqA.13}
0=\kappa-h_{(0)}\tr_{h_{(0)}}\kappa+F(h_{(0)},h_{(2)},\dots ,h^{(n-2)}(0))\ ,
\end{equation}
where, as in (\ref{eq2.14}), we use the notation $h_{(k)}:=\frac{1}{k!}h^{(k)}(0)$. But we can then manipulate this equation to remove the $h_{(0)}\tr_{h_{(0)}}\kappa$ term and write that
\begin{equation}
\label{eqA.14}
\kappa=G(h_{(0)},h_{(2)},\dots ,h^{(n-2)}(0))
\end{equation}
for some $G$. This shows that $\kappa$ does not depend on $h^{(n-1)}(0)$.

Then $\tr_{h_0}\kappa$ is independent of the Neumann data, so the mass relative to $h_0$ of any two metrics $(M,g_1)$, $(M,g_2)$ with $E_{g_1},E_{g_2}\in {\mathcal O}(x^{n+1})$, both of have $(\partial_{\infty}M,[h_0])$ as their conformal boundary, will be equal. But when $\ric(h_0)=(n-2)\lambda h_0$, there is always one such metric given by (\ref{eq2.9}) and it has zero mass since it has $\kappa=0$. Hence any other metric $(M,g)$ with $E_{g}\in {\mathcal O}(x^{n+1})$ which has $(\partial_{\infty}M,[h_0])$ as its conformal boundary will have zero mass (where that mass is defined relative to the Einstein metric $h_0$ if $n$ is even). \end{proof}

\begin{proof}[Proof of Proposition \ref{proposition2.9}]
Since $(M,g,N)$ is a solution of the static Einstein equations, $N^2d\tau^2 \oplus g$ is Poincar\'e-Einstein and has $\frac{\partial}{\partial \tau}$ as a Killing field. Then a special defining function $x$ for the APE metric $g$ on $M$ is also a special defining function for the Poincar\'e-Einstein metric $N^2d\tau^2 \oplus g$.

Let us recall \cite[Theorem 2.3.1]{DGH} (see also \cite[Theorem 4.8]{FG2}). This theorem says that since $N^2d\tau^2 \oplus g$ is Poincar\'e-Einstein metric on an $(n+1)$-manifold, terms up to order $x^{n-1}$ (inclusive) are uniquely determined by the boundary metric $d\tau^2+h_0$, as is the trace of the order $x^n$ term. The ambient obstruction tensor is also uniquely determined by $N^2 d\tau^2 \oplus g$ (and vanishes when $n$ is odd).

Now the projection orthogonal to $\frac{\partial}{\partial x}$ of the Einstein tensor of the spacetime metric is given by equation \eqref{eqA.11} with $n$ replaced by $n=1$ and $E^{\perp}=0$. We will also replace $h_x$ by ${\hat h}_x=x^2 N^2(x)d\tau^2\oplus h_x$ in \eqref{eqA.11}, with ${\hat h}_0 := d\tau^2 \oplus h_0$. Then we have
\begin{equation}
\label{eqA.15}
\begin{split}
0=&\, x{\hat h}_x''-(n-1){\hat h}_x'-{\hat h}_{x} \tr_{{\hat h}_x}{\hat h}_x'-x {\hat h}_x'\cdot {\hat h}_x^{-1}\cdot {\hat h}_x'\\
&\, +\frac12x{\hat h}_x'\tr_{{\hat h}_x}{\hat h}_x'-2x\ric({\hat h}_x)\ .
\end{split}
\end{equation}
Noting that $\ric_{{\hat h}_x}=0\cdot d\tau^2 \oplus \ric_{h_x}= 0\cdot d\tau^2 \oplus (n-2)\lambda h_0$ for $h_0$ an Einstein metric $\ric_{h_0}=(n-2)\lambda$, $\lambda\in \{ -1,0,1\}$, one can easily verify that
\begin{equation}
\label{eqA.16}
\begin{split}
g=&\, \frac{1}{x^2}\left ( dx^2 + {\hat h}_x\right )= \frac{1}{x^2}\left ( dx^2 + x^2N^2(x)d\tau^2 + h_x\right )\ , \\
h_x=&\, \left ( 1-\frac{\lambda x^2}{4} \right )^2 h_0\ , \\
N(x)=&\, \frac{1}{x}+\frac{\lambda x}{4}
\end{split}
\end{equation}
solves \eqref{eqA.15}. Though we have only checked that $E^{\perp}$ is zero, the full tensor $E$ vanishes. Therefore $g$ in \eqref{eqA.16} is Poincar\'e-Einstein and induces $[{\hat h}_0] =[d\tau^2\oplus h_0]$ as its conformal boundary. Furthermore, there is no log term so the obstruction tensor vanishes. But since the obstruction depends only on the metric on the conformal boundary, it vanishes for all Poincar\'e-Einstein metrics $g$ with this conformal boundary, and so $x^2g$ extends smoothly to $x=0$ for any such Poincar\'e-Einstein metric $g$ and related special defining function $x$.

As well, for $g$ as in \eqref{eqA.16}, let ${\bar g}$ denote a distinct Poincar\'e-Einstein metric with the same conformal boundary. Expressed in normal form, ${\bar g}$ can differ from $g$ only in the Neumann data term and beyond. By \emph{Neumann data}, we mean the lowest order components of ${\hat h}_x$ not determined by \eqref{eqA.15} in terms of ${\hat h}_0$. It's well-known that the Neumann data comprise the tracefree part of ${\hat h}_x^{(n)}\big \vert_{x=0}$. (To see this, differentiate \eqref{eqA.15} $k-1$ times, then set $x=0$, and notice what happens when $k=n$ versus when $k\neq n$.) In particular, ${\bar g}$ must agree with $g$ as given by \eqref{eqA.16} to order $x^{n-1}$ inclusive, and therefore must have normal form
\begin{equation}
\label{eqA.17}
\begin{split}
{\bar g}=&\, \frac{1}{x^2}\left ( dx^2+ {\hat h}_x\right )= \frac{1}{x^2}\left ( dx^2 + x^2N^2(x)d\tau^2 + h_x\right )\ , \\
h_x=&\, \left ( 1 -\frac14 \lambda x^2\right )^2 h_0 +\frac{1}{n}\kappa x^n +{\mathcal O}(x^{n+1})\\
N=&\, \frac{1}{x} \left ( 1+ \frac14 \lambda x^2\right )+\frac{a}{2}x^{n-1} +{\mathcal O}(x^n)\ .
\end{split}
\end{equation}
Furthermore, it's also well-known (and can be seen by $x$-differentiating \eqref{eqA.15} $n-1$ times at $x=0$) that the trace of the $n^{\rm th}$ $x$-derivative of ${\hat h}$ must vanish at $x=0$, which implies that
\begin{equation}
\label{eqA.18}
\begin{split}
0=&\, a +\frac{1}{n}\tr_{h_0}\kappa\\
\implies a=&\, -\frac{1}{n}\tr_{h_0} \kappa \equiv -\frac{\mu}{n}\ .
\end{split}
\end{equation}
\end{proof}

\end{document}